\newcommand{\F}{\mathbb{F}}
\tikzstyle{main node}=[circle, draw,
\tikzstyle{code node}=[circle, draw, fill=lightgray,
\theoremstyle{plain}
\newtheorem{theorem}{Theorem}
\newtheorem{lemma}[theorem]{Lemma}
\newtheorem{corollary}[theorem]{Corollary}
\theoremstyle{definition}
\newcommand{\Z}{\mathbb{Z}}
\newcommand{\bc}{\mathbf{c}}
\newcommand{\ba}{\mathbf{a}}
\newcommand{\bb}{\mathbf{b}}
\newcommand{\bv}{\mathbf{v}}
\newcommand{\bw}{\mathbf{w}}
\newcommand{\bx}{\mathbf{x}}
\newcommand{\bu}{\mathbf{u}}
\newcommand{\nolla}{\mathbf{0}}
\newcommand{\e}{\mathbf{e}}
\def\namedlabel#1#2{\begingroup
    #2%
    \def\@currentlabel{#2}%
    \phantomsection\label{#1}\endgroup
}
\newenvironment{customthm}[1]
  {\innercustomthm}
  {\endinnercustomthm}
\begin{document}

\title{Improved Lower Bound for Locating-Dominating Codes in Binary Hamming Spaces}

\author{\textbf{Ville Junnila}, \textbf{Tero Laihonen} and \textbf{Tuomo Lehtil{\"a}}\thanks{Research supported by the Jenny and Antti Wihuri foundation.}\\
Department of Mathematics and Statistics\\
University of Turku, FI-20014 Turku, Finland\\
viljun@utu.fi, terolai@utu.fi and tuomoleh@gmail.com} 
\date{}
\maketitle
\noindent

\begin{abstract}
In this article, we study locating-dominating codes in binary Hamming spaces $\F^n$. Locating-dominating codes have been widely studied since their introduction in 1980s by Slater and Rall. They are dominating sets suitable for distinguishing vertices in graphs. Dominating sets as well as locating-dominating codes have been studied in Hamming spaces in multiple articles. Previously, Honkala et al. (2004) have presented a lower bound for locating-dominating codes in binary Hamming spaces. In this article, we improve the lower bound for all values $n\geq10$. In particular, when $n=11$, we manage to improve the previous lower bound from $309$ to $317$. This value is very close to the current best known upper bound of $320$.
\end{abstract}\textbf{Keywords:}  Locating-dominating set; locating-dominating code; Hamming space; share technique

\section{Introduction}

In this paper, we consider a graph $\F^n$, which is the \textit{binary Hamming space of dimension} $n$, that is, the binary $n$-cube. The graph has $2^n$ vertices and its vertex set consists of binary \textit{words} of length $n$ consisting of zeroes and ones. Now two words have an edge between them if they differ in exactly one position. For example, $110$ and $100$ are adjacent in $\F^3$. Let $\bu=x_1x_2\dots x_n\in \F^n$ be a word where each $x_i\in\{0,1\}$. We denote by $\e_i=x_1\dots x_n$ the word where $x_i=1$ and $x_j=0$ for each $j\neq i$. The set of integers $\{i\mid a\leq i\leq b\}$ for some $a,b\in\Z$ will be denoted by $[a,b]$. Let us denote by $$w(\bu)=|\{x_i=1\mid \bu=x_1x_2\dots x_n, i\in[1,n]\}|$$ the \textit{weight} of the word $\bu$. The \textit{distance} $d(\bu,\bv)$ of two words $\bu$ and $\bv$ is equal to the number coordinates in which they differ. Thus,  $$d(\bu,\bv)=w(\bu+\bv)=|\{x_i\neq y_i\mid\bu=x_1x_2\dots x_n\text{ and } \bv=y_1y_2\dots y_n, 1\leq i\leq n\}|$$ where $\bu+\bv$ is a coordinatewise sum modulo $2$. Moreover, we denote by $N(\bu)$ the \textit{open neighbourhood} of the word $\bu$, that is, the set of words adjacent to the word $\bu$ (i.e., the set of words at distance one to word $\bu$). Moreover, by $N[\bu]$, we denote the set $N(\bu)\cup \{\bu\}$ called the \textit{closed neighbourhood} of $\bu$. We are also interested in sets of words which are farther away from a certain word. Hence, we denote \textit{a ball of radius $r$ centered at the word} $\bu$ by $$B_r(\bu)=\{\bv \in \F^n \mid d(\bu,\bv)\leq r\}.$$ Notice that $N[\bu]=B_1(\bu)$.

\textit{A code} $C$ in $\F^n$ is a nonempty set of words in the Hamming space. We denote by $$I(C;\bu)=I(\bu)=C\cap N[\bu]$$ the $I$\emph{-set} of the word $\bu$. We say that a code $C$ is \textit{dominating} if we have $I(C;\bu)\neq \emptyset$ for each word $\bu\in\F^n$. For an introduction on domination in graphs, see \cite{haynes1998fundamentals}. Moreover, a code $C$ is said to be \textit{locating-dominating} if it is dominating and for each pair of distinct non-codewords $\bu,\bv\not\in \F^n \setminus C$ we have $$I(\bu)\neq I(\bv).$$ We also denote $$I_r(C;\bu)= I_r(\bu) = C\cap B_r(\bu)$$ for $r\geq1$. In this article, we are mostly interested in the smallest possible locating-dominating codes in the Hamming space $\F^n$. The cardinality of such a code is denoted by $\gamma^{LD}(\F^n)$ and any code attaining this cardinality is called \textit{optimal}.

Slater and Rall have originally introduced locating-dominating codes in $1980$s, see for example, \cite{rall1984location, slater1987domination, slater1988dominating}. One of the motivations to study locating-dominating codes has been \textit{sensor networks}. In this context, we place sensors on some set of vertices determined by the locating-dominating code. Moreover, we assume that a sensor sends alarm $1$ if there is an intruder/object/malfunction in any neighbouring vertex and alarm $2$ if the problem is in the same vertex as the sensor itself. Now, since the sensor placement is done using the locating-dominating code, we can deduce the location of the object just by considering which sensors are sending the alarm. The topic of locating-dominating codes has attracted a lot of attention recently, see \cite{foucaud2020domination, hernando2019locating, hudry2019unique, junnila2019stronger, junnila2019optimal}. For more papers in the field consult, the bibliography \cite{lobstein2012watching}.

In \cite{slater2002fault}, Slater has given the following general lower bound for locating-dominating codes in Hamming spaces. \begin{theorem}[\cite{slater2002fault}, Theorem 2]
We have $$\gamma^{LD}(\F^n)\geq \frac{2^{n+1}}{n+3}$$
\end{theorem}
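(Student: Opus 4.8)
The plan is to prove the bound by a double-counting argument on the $I$-sets, combined with a careful accounting of how many non-codewords can be dominated by a single codeword. Let $C$ be an arbitrary locating-dominating code in $\F^n$ and write $c=|C|$; the goal is to show $c\geq 2^{n+1}/(n+3)$.

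First I would count in two ways the incidences between codewords and the closed neighbourhoods in which they lie. Since $\bc\in I(\bu)$ holds exactly when $\bu\in N[\bc]$, and since $|N[\bc]|=n+1$ for every codeword $\bc$, summing over all words of $\F^n$ gives the identity
$$\sum_{\bu\in\F^n}|I(\bu)|=(n+1)c.$$
Splitting this sum according to whether $\bu$ is a codeword, and using that $\bc\in I(\bc)$ forces $|I(\bc)|\geq 1$ for each codeword, I obtain $\sum_{\bc\in C}|I(\bc)|\geq c$ and hence
$$\sum_{\bu\in\F^n\setminus C}|I(\bu)|\leq (n+1)c-c=nc.$$

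Next I would bring in the locating hypothesis. Call a non-codeword $\bu$ \emph{lonely} if $|I(\bu)|=1$, say $I(\bu)=\{\bc\}$. If two distinct non-codewords were lonely with the same codeword $\bc$, they would share the $I$-set $\{\bc\}$, contradicting the locating-dominating property; thus at most one lonely non-codeword corresponds to each codeword, and the number of lonely non-codewords is at most $c$. Writing $a_i$ for the number of non-codewords $\bu$ with $|I(\bu)|=i$, domination gives $\sum_{i\geq 1}a_i=2^n-c$, the incidence bound gives $\sum_{i\geq 1}i\,a_i\leq nc$, and loneliness gives $a_1\leq c$.

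Finally I would combine these estimates. Since $a_i\leq\tfrac12 i\,a_i$ for every $i\geq 2$,
$$2^n-c=\sum_{i\geq 1}a_i\leq a_1+\tfrac12\sum_{i\geq 2}i\,a_i\leq a_1+\tfrac12(nc-a_1)=\tfrac12 a_1+\tfrac12 nc\leq\tfrac{n+1}{2}\,c,$$
and rearranging $2^n\leq\tfrac{n+3}{2}\,c$ yields $c\geq 2^{n+1}/(n+3)$, as claimed. The step I expect to be the crux is the loneliness bound $a_1\leq c$: this is precisely where the locating hypothesis (as opposed to mere domination) is used, and it is what upgrades the trivial domination estimate to the stated bound.
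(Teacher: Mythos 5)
Your proof is correct, and it takes a genuinely different route from the one the paper relies on. The paper does not reprove this statement---it cites Slater---but the method it attributes to \cite{slater2002fault} and builds on throughout is the share technique: one bounds $s(\bc)=\sum_{\bv\in N[\bc]}1/|I(\bv)|$ pointwise by noting that within $N[\bc]$ at most two words can have singleton $I$-set (the codeword $\bc$ itself, and at most one orphan, since two orphans adjacent to $\bc$ would both have $I$-set $\{\bc\}$), while the remaining $n-1$ words each contribute at most $1/2$; hence $s(\bc)\leq 2+(n-1)/2=(n+3)/2$, and Equation (2) of the paper gives the bound. Your argument is the global dual of this: instead of localizing to each codeword's neighbourhood, you double-count incidences via $\sum_{\bu\in\F^n}|I(\bu)|=(n+1)c$, discard the codewords with the crude estimate $|I(\bc)|\geq 1$, and control the singleton $I$-sets through the injection from lonely non-codewords to their unique dominators---your bound $a_1\leq c$ is exactly the same use of the locating hypothesis as the ``at most one orphan per codeword'' observation, just summed globally rather than applied neighbourhood by neighbourhood. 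Every step checks out (in particular the chain $2^n-c\leq \tfrac12 a_1+\tfrac12 nc\leq \tfrac{n+1}{2}c$ and the rearrangement to $c\geq 2^{n+1}/(n+3)$), and both accountings lose the same slack in the worst case, which is why you recover the identical constant. What the share formulation buys---and the reason this paper adopts it---is local information: it records which individual codewords come close to the extremal share $(n+3)/2$, which is precisely what allows the paper's later redistribution arguments (Rules 1--3) to shift share between neighbouring codewords and improve the constant. Your global partition by the values $|I(\bu)|$ aggregates that structure away, so while it is more elementary and arguably cleaner for the bare bound, it would not serve as a starting point for the refinements in Sections 2 and 3.
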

\noindent
In $2004$, Honkala \textit{et al.} \cite{honkala2004locating}, gave the following improvement on Slater's result.

\begin{theorem}[\cite{honkala2004locating}, Theorem $15$]\label{VanhaRaja}
We have
$$\gamma^{LD}(\F^n)\geq \frac{n^22^{n+1}}{n^3+2n^2+3n-2}.$$
\end{theorem}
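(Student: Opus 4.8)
The plan is to prove the lower bound $\gamma^{LD}(\mathbb{F}^n)\geq \frac{n^22^{n+1}}{n^3+2n^2+3n-2}$ via the \emph{share technique}, which is the standard averaging method for locating-dominating codes. For a code $C$ and a codeword $\bc\in C$, one defines the \emph{share} $s(\bc)=\sum_{\bu\in N[\bc]}\frac{1}{|I(\bu)|}$, so that summing over all codewords gives $\sum_{\bc\in C}s(\bc)=\sum_{\bu\in\F^n}\frac{|I(\bu)|}{|I(\bu)|}=|\F^n|=2^n$, since every word $\bu$ with $I(\bu)\neq\emptyset$ contributes exactly $1$ and domination guarantees $I(\bu)\neq\emptyset$ for all $\bu$. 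Consequently, if I can establish a uniform upper bound $s(\bc)\leq M$ for every codeword, then $2^n=\sum_{\bc\in C}s(\bc)\leq M|C|$, giving $|C|\geq 2^n/M$. Matching this to the claimed bound requires showing $M\leq \frac{n^3+2n^2+3n-2}{2n^2}$.

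First I would fix a codeword $\bc$ and analyze its closed neighbourhood $N[\bc]$, which contains $\bc$ together with its $n$ neighbours. The share $s(\bc)$ is a sum of $n+1$ reciprocal terms $\frac{1}{|I(\bu)|}$, one for each $\bu\in N[\bc]$. To bound $s(\bc)$ from above I must bound the sizes $|I(\bu)|$ from \emph{below}: the larger the $I$-sets of the words near $\bc$, the smaller the share. The key structural input is the locating-domination property: distinct non-codewords must have distinct nonempty $I$-sets. This severely restricts how many words $\bu\in N[\bc]$ can have small $I$-sets. For instance, among the non-codewords in $N[\bc]$, at most one can satisfy $I(\bu)=\{\bc\}$ (a singleton equal to $\{\bc\}$), since two such words would share the same $I$-set. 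More generally, the number of non-codewords $\bu$ with $|I(\bu)|=k$ and $\bc\in I(\bu)$ is limited by a counting argument on the distinct subsets of $C$ that can occur, and by how many neighbours of a given word can themselves be codewords.

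The central computation is therefore to determine, over all admissible local configurations around $\bc$, the maximum possible value of $\sum_{\bu\in N[\bc]}\frac{1}{|I(\bu)|}$. I would organize this as an optimization: classify the $n+1$ words of $N[\bc]$ by whether they are codewords or not and by the size of their $I$-sets, then use the locating condition to cap the multiplicities of each small $I$-set size and push the remaining configuration to make the reciprocals as large as possible. Worst cases typically force a few words to have $I$-set $\{\bc\}$ or a two-element $I$-set while the rest are compelled to have larger $I$-sets. Carrying out this discrete optimization and verifying that the extremal configuration yields exactly $M=\frac{n^3+2n^2+3n-2}{2n^2}$ is the crux of the argument.

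The hard part will be this local optimization: pinning down precisely which distributions of $I$-set sizes over $N[\bc]$ are simultaneously realizable under the global locating constraint, and proving that no configuration exceeds the target value of $M$. In particular, one must be careful that the constraints used are truly enforceable codeword-by-codeword rather than only on average, since the share bound needs $s(\bc)\leq M$ for \emph{every} $\bc$. I expect the bookkeeping to hinge on a clean lemma bounding the number of neighbours of $\bc$ that can have very small $I$-sets, after which the reciprocal sum is maximized by a routine (if delicate) case analysis that produces the denominator $n^3+2n^2+3n-2$.
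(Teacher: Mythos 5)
Two things up front: this statement is quoted in the paper from \cite{honkala2004locating} and is not proved here, so the only fair comparison is with the method that source (and this paper's machinery) actually uses; and your reduction itself is fine --- the identity $\sum_{\bc\in C}s(\bc)=2^n$ and the arithmetic $2^n/M=\frac{n^22^{n+1}}{n^3+2n^2+3n-2}$ for $M=\frac{n^3+2n^2+3n-2}{2n^2}$ are correct. The genuine gap is that everything after that is deferred to an unperformed ``discrete optimization,'' and the premise behind it --- that the constant $M$ arises as a per-codeword maximum of the share --- is wrong. Write $M=\frac{n}{2}+1+\frac{3n-2}{2n^2}$. The paper's Lemma \ref{TyyppiLemma} shows that for $n\geq 10$ the true pointwise maximum is $\frac{n}{2}+1+\frac{1}{n-1}$, attained by Type-\ref{5} ``special'' codewords, and $\frac{3n-2}{2n^2}-\frac{1}{n-1}=\frac{n^2-5n+2}{2n^2(n-1)}>0$ for $n\geq 5$; so no admissible neighbourhood configuration ``yields exactly $M$,'' and a correctly executed local optimization would produce a different (in fact stronger, for $n\geq10$) bound than the one you are asked to prove. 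The denominator $n^3+2n^2+3n-2$ does not come from a pointwise share maximum at all: Honkala et al.\ obtain it by a global counting argument over the whole code, built on the classification into orphans, couples, sons and fathers (Types \ref{i}--\ref{iv} in Section \ref{first bound}); the pointwise-share route is Slater's \cite{slater2002fault}, and it gives only $\frac{2^{n+1}}{n+3}$ without substantial structural work.

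Worse, the uniform bound $s(\bc)\leq M$ that your plan requires ``codeword-by-codeword'' is not locally enforceable in general: the Type-\ref{5} configuration (one orphan, $n-2$ sons, one $F_{n-1}$-father, $I(\bc)=\{\bc\}$) is locally consistent with locating-domination and has share $\frac{n}{2}+1+\frac{1}{n-1}$, which \emph{exceeds} $M$ when $n\leq 4$ (e.g.\ $10/3>106/32$ at $n=4$), while the theorem is stated for all $n$ and is applied in the paper's Table \ref{Table} down to $n=6$. So a purely local analysis of $N[\bc]$ cannot close the argument for small $n$, and even in the range where $s(\bc)\leq M$ does hold, establishing it is precisely the hard structural content (couples, the father--son count of Inequality (\ref{isaMaara}), the convexity estimate (\ref{Apuyhtälö}), and the case analysis of Types \ref{1}--\ref{6}) that your proposal names but never supplies. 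As written, the proposal proves nothing beyond the standard share identity; to salvage it you would either have to reproduce the global counting of \cite{honkala2004locating}, or carry out the full Lemma-\ref{TyyppiLemma}-style analysis --- which proves a different theorem.
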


After the paper \cite{honkala2004locating}, there has been no advances in the lower bound. The goal of this article is to improve the bound of Theorem \ref{VanhaRaja} when $n\geq10$. The new bounds are presented in Corollary \ref{perusCorollary} and Theorem \ref{parasraja}. The new lower bound is especially interesting for $n=11$. In that case, the earlier bound of Theorem \ref{VanhaRaja}, gave the lower bound $309$. In Theorem \ref{parasraja}, we improve this bound to $\gamma^{LD}(\F^{11})\geq317$. Moreover, in \cite[Corollary 27]{junnila2018regular}, there is a construction of cardinality $320$ for locating-dominating codes in $\F^{11}$. Thus, the possible cardinality of an optimal locating-dominating code in $\F^{11}$ is in $[317,320]$ and the gap between the lower and the upper bound is rather small. In fact, even in the binary Hamming spaces $\F^n$ where $n\in[7,10]$ the gap between the lower and upper bound is larger as we can see in Table \ref{Table}.

Observe that many of the upper bounds (those with Key (c)) in Table \ref{Table} are based on \textit{identifying codes}. A code $C\in\F^n$ is said to be identifying \cite{karpovsky1998new} if it is dominating and for each pair of distinct words $\bv,\bu \in \F^n$  we have $$I(\bv)\neq I(\bu).$$ In particular, every identifying code is also a locating-dominating code. An interested reader may find more information about identifying codes, for example, in \cite{auger2010minimal, charon2010new,  hudry2019unique, junnila2019conjecture}.

%
\begin{table}[]\center{
\begin{tabular}{|l|l|l||l|l|l|l|}
	\hline
	$n$ & Lower bound & Upper bound & $n$  & Lower bound & Lower bound (New) & Upper bound \\ \hline\hline
	$1$ & $1$ (A)         & $1$ (A)     & $8$  & $50$ (B)       &             & $61$ (e)    \\ \hline
	$2$ & $2$ (A)         & $2$ (A)     & $9$  & $91$ (B)       &             & $112$ (c)   \\ \hline
	$3$ & $4$ (A)         & $4$ (A)     & $10$ & $167$ (B)      & $171$       & $208$ (c)   \\ \hline
	$4$ & $6$ (A)         & $6$ (A)     & $11$ & $309$ (B)      & $317$       & $320$ (d)   \\ \hline
	$5$ & $10$ (A)        & $10$ (A)    & $12$ & $576$ (B)      & $589$       & $640$ (d)   \\ \hline
	$6$ & $16$ (B)        & $18$ (A)    & $13$ & $1077$ (B)     & $1099$      & $1280$ (c)  \\ \hline
	$7$ & $28$ (B)        & $32$ (A)    & $14$ & $2023$ (B)     & $2059$      & $2550$ (c)  \\ \hline
\end{tabular}}\caption{Bounds for locating-dominating codes in small Hamming spaces. The second and fifth columns contain old lower bounds and the sixth column new lower bounds. The keys of the table ar as follows: (A) \cite{honkala2004locating}, (B) Theorem \ref{VanhaRaja}, (c) \cite{charon2010new}, (d) \cite{junnila2018regular} and (e) \cite[Appendix]{ranto2007identifying}.}\label{Table}
\end{table}
The best known bounds for the locating-dominating codes in $\F^n$ have been presented in Table \ref{Table}. The keys of the table are presented below:
\begin{enumerate}
\item[(A)] \cite{honkala2004locating} contains some trivial bounds as well as computer searches and a theorem for systematic upper bounds for locating-dominating codes.
\item[(B)] Lower bound presented in Theorem \ref{VanhaRaja}.
\item[(c)] \cite{charon2010new} contains constructions for identifying codes.
\item[(d)] \cite{junnila2018regular} contains some constructions for locating-dominating codes.
\item[(e)] \cite[Appendix]{ranto2007identifying} contains a computer search for a locating-dominating code in $\F^n$. 
\end{enumerate}

\medskip

In this paper, we combine the approaches of \cite{honkala2004locating} and the share technique considered in \cite{slater2002fault}. We apply the share technique by introducing a set of three rules for averaging the share among the codewords combined with some careful analysis of the structure of the binary Hamming space.

The following result is well-known and easily verifiable.
\begin{lemma}\label{palloleikkaus}
Let $\ba,\bb\in \F^n$. We have
$$\big|N[\ba]\cap N[\bb]\big|=\begin{cases}
 0, & \text{if }d(\ba,\bb)\geq 3 
\cr 2, & \text{if }d(\ba,\bb)=2
\cr 2, & \text{if }d(\ba,\bb)=1
\cr n+1, & \text{if }\ba=\bb.
\end{cases}$$
\end{lemma}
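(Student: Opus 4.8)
The plan is to exploit the identification $N[\bu]=B_1(\bu)$ noted just after the definition of balls, so that $N[\ba]\cap N[\bb]$ is exactly the set of words lying within distance one of both $\ba$ and $\bb$. The cleanest route I see is to parametrize this intersection algebraically: a word $\bv$ lies in $N[\ba]\cap N[\bb]$ if and only if $\bv=\ba+\bx=\bb+\by$ for some $\bx,\by\in\F^n$ with $w(\bx)\leq1$ and $w(\by)\leq1$. Since the assignment $\bv\mapsto(\bx,\by)=(\bv+\ba,\bv+\bb)$ is a bijection from the intersection onto the set of such pairs, and any such pair satisfies $\bx+\by=(\bv+\ba)+(\bv+\bb)=\ba+\bb$, counting $|N[\ba]\cap N[\bb]|$ reduces to counting pairs $(\bx,\by)$ of weight at most one whose sum equals the fixed vector $\bc:=\ba+\bb$, which has weight $w(\bc)=d(\ba,\bb)$.

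First I would dispose of the two extreme cases. If $\ba=\bb$, then $N[\ba]\cap N[\bb]=N[\ba]=B_1(\ba)$, whose cardinality is $n+1$. If $d(\ba,\bb)\geq3$, the triangle inequality gives $d(\ba,\bb)\leq d(\ba,\bv)+d(\bv,\bb)\leq2$ for any common element $\bv$, so the intersection is empty; equivalently, no pair $(\bx,\by)$ with $w(\bx),w(\by)\leq1$ can sum to a vector of weight at least $3$, since $w(\bx+\by)\leq w(\bx)+w(\by)\leq2$.

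It then remains to handle $d(\ba,\bb)\in\{1,2\}$ by the counting above. Writing $\bc=\e_i$ when $d(\ba,\bb)=1$ and $\bc=\e_i+\e_j$ with $i\neq j$ when $d(\ba,\bb)=2$, I would run through the possibilities $\bx=\nolla$ and $\bx=\e_k$, checking in each case for which $k$ the complementary vector $\by=\bc+\bx$ still has weight at most one. In the weight-one case the only admissible pairs are $(\nolla,\e_i)$ and $(\e_i,\nolla)$, corresponding to $\bv=\ba$ and $\bv=\bb$; in the weight-two case the only admissible pairs are $(\e_i,\e_j)$ and $(\e_j,\e_i)$, corresponding to the two common neighbours $\ba+\e_i$ and $\ba+\e_j$ (note $\ba+\e_i=\bb+\e_j$, consistent with $\bb=\ba+\e_i+\e_j$). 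Either way the intersection has exactly two elements.

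There is no serious obstacle here, as the authors indicate; the only thing requiring care is the bookkeeping, namely making sure no admissible pair is double counted and that the qualitatively different descriptions of the two size-two cases are both captured, the pair $\{\ba,\bb\}$ itself when $d(\ba,\bb)=1$ versus the two intermediate words when $d(\ba,\bb)=2$. A reader who prefers to avoid the bijection can instead argue directly: at distance one the two balls share precisely their centres, and at distance two they share precisely the two words obtained from $\ba$ by flipping one of the two coordinates in which $\ba$ and $\bb$ disagree.
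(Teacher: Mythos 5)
Your proof is correct. The paper gives no proof of this lemma at all---it is stated as ``well-known and easily verifiable''---and your case analysis reproduces exactly the explicit descriptions the authors record in the remark immediately following the lemma ($N[\ba]\cap N[\bb]=\{\ba,\bb\}$ when $d(\ba,\bb)=1$, and $N[\ba]\cap N[\bb]=\{\bb+\e_i,\bb+\e_j\}$ when $\ba=\bb+\e_i+\e_j$), with the direct argument you sketch in your final sentence being essentially that remark; the bijection $\bv\mapsto(\bv+\ba,\bv+\bb)$ is sound (it is injective since $\bv=\ba+\bx$, and surjective onto pairs summing to $\ba+\bb$), so your packaging is a complete and slightly more formal version of what the paper leaves unproved.
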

Notice that if we have $\ba,\bb\in \F^n$ and $d(\ba,\bb)=1$, then $N[\ba]\cap N[\bb]=\{\ba,\bb\}$. Moreover, if $\ba=\bb$, then $N[\ba]\cap N[\bb]=N[\ba]=\{\ba\}\cup\{\ba+\e_i\mid i\in[1,n]\}$. Furthermore, if $d(\ba,\bb)=2$, then $\ba=\bb+\e_i+\e_j$, $i\neq j$, and $N[\ba]\cap N[\bb]=\{\bb+\e_i,\bb+\e_j\}$.

\section{The first bound}\label{first bound}


Let $C\subseteq \F^n$ be a locating-dominating code. Since $C$ is a dominating code, we have $|I(\bv)|\geq1$ for all $\bv\in \F^n$. Now there can exist only four different types of words in $\F^n$ as we will explain below.

\begin{enumerate}
\item[\namedlabel{i}{$(i)$}] A word $\bv$ such that $|I(\bv)|=1$.
\item[\namedlabel{ii}{$(ii)$}] Codeword pairs or \textit{couples} $\bc_1,\bc_2$ such that $I(\bc_1)=I(\bc_2)=\{\bc_1,\bc_2\}$.
\item[\namedlabel{iii}{$(iii)$}] A word $\bu$ such that $|I(\bu)|\geq3$.
\item[\namedlabel{iv}{$(iv)$}] A word $\bx$ such that $|I(\bx)|=2$ and $I(\bx)\subset I(\bu)$ for some $\bu$ such that $|I(\bu)|\geq3$.
\end{enumerate}

We call the words of Type \ref{iv}  \textit{sons} and words of Type \ref{iii} \textit{fathers}. If $\bx$ is a son and $I(\bx)\subseteq I(\bu)$, then the word $\bu$ is the father of $\bx$. Notice that the son $\bx$ has only one father by Lemma \ref{palloleikkaus}. The set of a father and its sons forms a \textit{family}. Observe that it is possible to have no sons in a family, that is, a family consisting of a single father. Notice that together Cases \ref{ii} and \ref{iv} consider every possible word with $I$-set of cardinality two. Indeed, if we have a non-codeword $\bx$ with $I(\bx)=\{\bc,\bc'\}$, then $d(\bc,\bc')=2$ and, by Lemma \ref{palloleikkaus}, we have $|N[\bc]\cap N[\bc']|=2$. Hence, there exists another word $\bu$ such that $I(\bx)=\{\bc,\bc'\}\subseteq I(\bu)$. Since $C$ is a locating-dominating code, we have $|I(\bu)|\geq3$ and therefore, $\bu$ is the father of the son $\bx$. Furthermore, if we have a codeword $\bc$ such that $I(\bc)=\{\bc,\bc'\}$, then $I(\bc)\subseteq I(\bc')$. If $|I(\bc')|=2$, then $\bc$ and $\bc'$ form a couple. If $|I(\bc')|\geq3$, then $\bc$ is the son of the father $\bc'$. 

Furthermore, let us say that a father which is covered by $i\geq3$ codewords is an $F_i$\emph{-father} and by $F_i(\bc)$ we denote the number of $F_i$-fathers in $N[\bc]$.  
 Moreover, if a word of Type \ref{i} is a non-codeword, then we call it an \textit{orphan}.

The \textit{share} of a codeword $\bc\in C$ is introduced by Slater in \cite{slater2002fault} as $$s(\bc)=\sum_{\bv\in N[\bc]}\frac{1}{|I(\bv)|}.$$ Moreover, we have
\begin{equation}\label{shareIdea}
\sum_{\bc\in C}s(\bc)=|\F^n|=2^n.
\end{equation}
 Thus, if $s(\bc)\leq \alpha$ for each $\bc\in C$ and some positive real constant $\alpha$, then we have $|C|\alpha\geq2^n$ and hence, \begin{equation}\label{shareIdea2}|C|\geq \frac{2^n}{\alpha}.\end{equation}

Notice that we may calculate the share of a codeword $\bc$ if we know, in its neighbourhood, the number words of Types $(i)$ to $(iv)$ and $F_i(\bc)$ for each $3\leq i\leq n+1$. In particular, each $F_i$-father in $N[\bc]$ contributes $1/i$ to $s(\bc)$, each son $1/2$, each orphan $1$ and if $\bc$ is in a couple, then the couple contributes total of $1$ to the share. Therefore, we have $$s(\bc)=\left(\sum_{i=3}^{n+1} \sum_{\substack{\bv\in N[\bc] \\ \bv \text{ is an } F_i\text{-father}}}\frac{1}{i}\right)+\sum_{\substack{\bx\in N[\bc] \\ \bx\text{ is a son}}}\frac{1}{2}+\sum_{\substack{\bc\text{ and }\bc'\\\text{are a couple}}}1+\sum_{\substack{\bu\in N[\bc]\\ \bu\text{ is an orphan}}}1+\sum_{I(\bc)=\{\bc\}}1.$$
Obviously every sum cannot be non-empty simultaneously. For example, if $\bc$ and $\bc'$ form a couple, then $I(\bc)\neq\{\bc\}$. Observe that $|N[\bc]|=n+1$ and there is at most one orphan in $N[\bc]$ for $\bc\in C$ if $C$ is a locating-dominating code. Indeed, if we have two orphans $\ba,\bb\in N[\bc]$, then we have $I(\ba)=I(\bb)=\{\bc\}$.


Let us then consider the number of sons and their fathers in the vicinity of some codeword $\bc_1$. Let $\bv\in N[\bc_1]$ be a son with $I$-set $I(\bv)=\{\bc_1,\bc_2\}$. Since $\bv$ is a son, there exists a father $\bu\neq \bv$, such that $I(\bv)\subseteq I(\bu)$ and $I(\bu)=\{\bc_1,\bc_2,\dots, \bc_h\}$ where $h\geq3$ by \ref{iv} (notice that $\bv$ might be $\bc_1$ or $\bc_2$ in which we have $\{\bu,\bv\}=\{\bc_1,\bc_2\}$ and hence, $\bc_1$ does not form a couple with $\bc_2$).  Moreover, there can be at most $h-1$ sons in $N[\bc_1]$ with $\bu$ as their father. Namely, those with $I$-set equal to $\{\bc_1,\bc_i\}$ where $i\in[2,h]$. Hence, for each son in $N[\bc_1]$, we also have a father in $N[\bc_1]$ and for each $F_h$-father in $N[\bc_1]$, we have at most $h-1$ sons in $N[\bc_1]$. Therefore, if we have $t$ sons in $N[\bc]$, then  we have \begin{equation}\label{isaMaara}
\sum_{i=3}^{n+1}(i-1)F_i(\bc)\geq t.
\end{equation}


In general, if we have $F_{i_1}$- and $F_{i_2}$-fathers, $i_1\geq i_2\geq4$, in $N[\bc]$, $\bc\in C$, then the sum of the individual contributions of the fathers to the share $s(\bc)$ can be estimated as follows


\begin{equation}\label{Apuyhtälö}
\frac{1}{i_1}+\frac{1}{i_2}< \frac{1}{i_1+1}+\frac{1}{i_2-1}\text{, when } i_1\geq i_2\geq2.
\end{equation}

\medskip

In what follows, we consider the following types of codewords:
\begin{enumerate}
\item[\namedlabel{1}{$1)$}] A codeword without orphans in their neighbourhood.
\item[\namedlabel{2}{$2)$}] A codeword $\bc$ such that $|I(\bc)|\geq2$.
\item[\namedlabel{3}{$3)$}] A codeword $\bc$ with at most $n-4$ sons, one orphan in $N(\bc)$ and $I(\bc)=\{\bc\}$.
\item[\namedlabel{4}{$4)$}] A codeword $\bc$ with $n-3$ sons, one orphan in $N(\bc)$ and $I(\bc)=\{\bc\}$.
\item[\namedlabel{5}{$5)$}] A codeword $\bc$ with $n-2$ sons, one orphan in $N(\bc)$ and $I(\bc)=\{\bc\}$.
\item[\namedlabel{6}{$6)$}] A codeword $\bc$ with more that $n-2$ sons, one orphan in $N(\bc)$ and $I(\bc)=\{\bc\}$.
\end{enumerate}


\begin{lemma}\label{TyyppiLemma}
Let $n\geq10$ and $C\subseteq\F^n$ be a locating-dominating code. Then the following properties hold: 
\begin{enumerate}
\item We have $s(\bc)\leq n/2+1+1/(n-1)$ for each $\bc\in C$.
\item If $\bc\in C$ is not of Type \ref{5}, then $s(\bc)\leq n/2+1$.
\item If $n\geq13$ and $\bc\in C$ is not of Type \ref{5}, then $s(\bc)\leq \frac{n}{2}+1+\frac{1}{n^2-5n}-\frac{2}{3n}$.
\end{enumerate}
\end{lemma}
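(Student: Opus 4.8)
The plan is to bound $s(\bc)$ separately on each of the codeword Types \ref{1}--\ref{6}, reading each contribution directly off the share formula. Recall that $N[\bc]$ consists of $n+1$ words, and each of them is either $\bc$ itself, an orphan (contributing $1$, with at most one present), a son (contributing $1/2$), an $F_i$-father (contributing $1/i$), or a codeword distinct from $\bc$. The three structural facts I would lean on are: at most one orphan lies in $N(\bc)$; by the discussion preceding \eqref{isaMaara}, every son in $N[\bc]$ has its father also in $N[\bc]$, and when $I(\bc)=\{\bc\}$ this father is one of the $n$ neighbours (since then $|I(\bc)|=1$ forbids $\bc$ from being a father); and the counting inequality \eqref{isaMaara}, $\sum_i (i-1)F_i(\bc)\geq t$, where $t$ is the number of sons in $N[\bc]$.

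First I would treat the cases with $I(\bc)=\{\bc\}$ and one orphan, i.e.\ Types \ref{3}--\ref{6}. Here $\bc$ and the orphan contribute $2$, and the remaining $n-1$ neighbours split into $t$ sons and $f=\sum_i F_i(\bc)$ fathers with $t+f=n-1$; combining this with \eqref{isaMaara} gives $\sum_i iF_i(\bc)\geq n-1$. Writing
$$s(\bc)=2+\frac{n-1}{2}-\sum_i F_i(\bc)\left(\frac12-\frac1i\right),$$
the task becomes to minimise the father penalty $\sum_i F_i(\bc)(1/2-1/i)$ subject to $\sum_i iF_i(\bc)\geq n-1$. Using \eqref{Apuyhtälö} I would argue that fragmenting fathers only increases this penalty, so it is smallest for a single father of the least admissible type, namely one $F_{n-1}$-father covering $t=n-2$ sons; this is exactly Type \ref{5}, and it yields $s(\bc)=n/2+1+1/(n-1)$, the value in Part 1. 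Fewer sons (Types \ref{3}, \ref{4}) force $f\geq2$ and hence a strictly larger penalty, dropping the share to at most $n/2+1$, while Type \ref{6} would need $t\geq n-1$ sons with no neighbour left to serve as a father, contradicting \eqref{isaMaara}; thus Type \ref{6} is vacuous.

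For the remaining codewords cruder estimates suffice. If $|I(\bc)|\geq2$ (Type \ref{2}), then $\bc$ contributes at most $1/2$, every neighbour other than the at most one orphan has $|I|\geq2$ and so contributes at most $1/2$, whence $s(\bc)\leq \tfrac12+1+\tfrac{n-1}{2}=n/2+1$. If $\bc$ has no orphan (Type \ref{1}) and $I(\bc)=\{\bc\}$, then all $n$ neighbours are sons or fathers and the single-father analysis above, now with one extra neighbour free, gives $s(\bc)\le n/2+1/2+1/n<n/2+1$; and a no-orphan $\bc$ with $|I(\bc)|\ge2$ already falls under the Type \ref{2} estimate. Together with the previous paragraph this establishes Part 1 (all types are at most $n/2+1+1/(n-1)$, with Type \ref{5} extremal) and Part 2 (every non-Type-\ref{5} codeword is at most $n/2+1$).

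For Part 3 I would revisit the near-tight non-Type-\ref{5} configurations — principally Type \ref{4} with its $n-3$ sons and two fathers, together with the $|I(\bc)|=2$ subcase of Type \ref{2} — and sharpen the penalty estimate by tracking the exact admissible father types through \eqref{Apuyhtälö} rather than the blanket bound $1/2-1/i\geq1/6$. The main obstacle is precisely this bookkeeping: one must optimise over multisets of father types, confirm that Type \ref{5} is the \emph{unique} type whose share can exceed $n/2+1$, and extract the exact refined constant $1/(n^2-5n)-2/(3n)$, verifying for $n\geq13$ that the worst surviving father configuration produces no larger an excess.
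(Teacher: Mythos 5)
Your Parts 1 and 2 are correct, and the route is essentially the paper's: the paper also proceeds type by type using (\ref{isaMaara}) and (\ref{Apuyhtälö}), and your penalty identity $s(\bc)=2+\frac{n-1}{2}-\sum_i F_i(\bc)\left(\frac{1}{2}-\frac{1}{i}\right)$ with constraint $\sum_i iF_i(\bc)\geq n-1$ is a clean repackaging that reproduces its case values: $f=1$ forces $t=n-2$ and an $F_{n-1}$-type father (Type \ref{5}); $f=2$ gives $\frac{n}{2}+\frac{1}{2}+\frac{n-1}{3(n-4)}\leq\frac{n}{2}+1$ precisely when $n\geq10$; $f=3$ gives $\frac{n}{2}+\frac{2}{3}+\frac{1}{n-7}$, which equals $\frac{n}{2}+1$ only at $n=10$ (the paper's ``three $F_3$-fathers'' case); $f\geq4$ gives at most $\frac{n}{2}+\frac{5}{6}$; and Type \ref{6} is vacuous for the reason you state.

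Part 3, however, is not proved: your final paragraph is a plan, and it contains a genuine hole. Since $\frac{1}{n^2-5n}-\frac{2}{3n}<0$ for $n\geq13$, Part 3 requires every non-Type-\ref{5} codeword to sit strictly below $\frac{n}{2}+1$; but your Type \ref{2} bound is exactly $\frac{n}{2}+1$ (orphan $1$, everything else $\frac{1}{2}$), and no amount of ``optimising over multisets of father types'' can improve it, because that estimate contains no father to optimise over. The missing step is structural, not bookkeeping: a codeword of Type \ref{1} or \ref{2} always has at least one father in $N[\bc]$ --- at most one neighbour is an orphan, a couple inside $N[\bc]$ can only involve $\bc$ itself, and every remaining word with $|I|=2$ is a son whose father again lies in $N[\bc]$. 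That one father turns a $\frac{1}{2}$ into at most $\frac{1}{3}$ and yields $\frac{n}{2}+\frac{5}{6}<\frac{n}{2}+1-\frac{1}{n-1}$, which the paper then pushes under the Part 3 target via (\ref{ApuEYht}). Relatedly, your aim to ``extract the exact refined constant'' from the father optimisation is misdirected: $\frac{1}{n^2-5n}-\frac{2}{3n}$ is not produced inside this lemma at all, but by the later averaging of \ref{Rule23} (see (\ref{sääntö23 tasaus})); the lemma's job is merely to verify that each case value --- $\frac{n}{2}+\frac{5}{6}$ for Types \ref{1} and \ref{2}, $\frac{n}{2}+\frac{11}{12}$ for Type \ref{3} once $n\geq11$ (your constraint $i+j+k\geq n-1$ does rule out three $F_3$-fathers there, but you never use this for $n\geq13$), and the genuinely tight $\frac{n}{2}+\frac{1}{2}+\frac{n-1}{3(n-4)}$ for Type \ref{4} --- lies below that externally given target for $n\geq13$, which the paper checks by substitution for the small cases and by the estimate $\frac{n-1}{3(n-4)}\leq\frac{5}{12}$ for $n\geq16$. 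These comparisons are the entire content of Part 3, and none of them is carried out in your proposal.
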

\begin{proof}
Next we will bound the share of a codeword from above. Roughly saying, the less fathers there are in the neighbourhood $N[\bc]$, the greater the share $s(\bc)$ is. Observe that for  $n\geq13$ \begin{equation}\label{ApuEYht}
\frac{n}{2}+1-\frac{1}{n-1}\leq\frac{n}{2}+1+\frac{1}{n^2-5n}-\frac{2}{3n}.
\end{equation} This inequality will be useful multiple times in this proof.  

Let us first consider a codeword $\bc\in C$ of Type \ref{1}. Moreover, we may assume that there is at least one father in $N[\bc]$. Indeed, if we have even one son in $N[\bc]$, then by definition we have at least one father in $N[\bc]$. Moreover, we have at least one son or father in $N[\bc]$ since $\bc$ belongs to at most one couple and 
there are no orphans in $N[\bc]$ since $\bc$ is of Type \ref{1}. Furthermore, $\bc$ is the only word in $N[\bc]$ which may contribute $1$ to the share, a father contributes at most $1/3$ and all other words contribute at most $1/2$ (including the codewords in a couple). Therefore, we have $$s(\bc)\leq 1+\frac{1}{3}+\frac{n-1}{2}=\frac{n}{2}+\frac{5}{6}< \frac{n}{2}+1-\frac{1}{n-1}.$$ Hence, the claims $1$ and $2$ follow immediately and $3$ follows by Inequality (\ref{ApuEYht}).

Let us then consider a codeword $\bc\in C$ of Type \ref{2}. We may again assume that there is at least one father in $N[\bc]$. Now, the orphan is the only word in $N[\bc]$ which may contribute $1$ to the share, a father contributes at most $1/3$ and all other words contribute at most $1/2$. Therefore, we have $$s(\bc)\leq 1+\frac{1}{3}+\frac{n-1}{2}=\frac{n}{2}+\frac{5}{6}< \frac{n}{2}+1-\frac{1}{n-1}.$$ Hence, the claims$1$ and $2$ follow immediately and $3$ follows with Inequality (\ref{ApuEYht}).

From now on we assume that the codeword $\bc$ has an orphan in its neighbourhood and no codeword neighbours. Consequently, there can be no couples among the words in $N[\bc]$. Let then the codeword $\bc$ be of Type \ref{3}. Since $\bc$ has at most $n-4$ sons, it has at least three fathers in its neighbourhood. Let us first assume that there are $s\geq4$ fathers in $N[\bc]$. Now, we have two words in $N[\bc]$ which give $1$ to the share $s(\bc)$: the orphan and $\bc$ itself. Moreover, since we have at least four fathers, we have at most $n-5$ sons. Finally, each of the fathers contributes at most $1/3$ to the share. Therefore, we have $$s(\bc)\leq 2+\frac{4}{3}+\frac{n-5}{2}=\frac{n}{2}+\frac{5}{6}<\frac{n}{2}+1-\frac{1}{n-1}.$$ Thus, claims $1$, $2$ and $3$ follow in this case and from now on we may assume that we have at most $3$ fathers in $N[\bc]$. Let us then consider the case with exactly three fathers. Recall that we assume $n\ge 10$. If we have exactly three fathers and each of them is a $F_3$-father, then $n=10$ by (\ref{isaMaara}) and we have $s(\bc)\leq 2+(n-4)\frac{1}{2}+3\cdot\frac{1}{3}=\frac{n}{2}+1$ and the claims $1$ and $2$ follow for $n=10$. Moreover, if $n\geq11$, then at least one of the fathers is an $F_i$-father where $i\geq4$ (or we have more than three fathers in $N(\bc)$). Hence, we have \begin{equation}\label{n-4Lasta}
s(\bc)\leq 2+\frac{n-4}{2}+\frac{2}{3}+\frac{1}{4}=\frac{n}{2}+\frac{11}{12}\leq\frac{n}{2}+1+\frac{1}{n^2-5n}-\frac{2}{3n}.
\end{equation}
Therefore, the claims $1$ and $2$ follow immediately and $3$ follows with Inequality (\ref{ApuEYht}).

Let us now assume that the codeword $\bc$ is of Type \ref{4}. Since there is an orphan and no other codewords in its neighbourhood while having $n-3$ sons, it has exactly two fathers $\bu$ and $\bv$ in its neighbourhood. Let us say that they are of types $F_i$ and $F_j$ where $i\leq j$. Now, $s(\bc)\leq 2+(n-3)/2+1/i+1/j$. By Inequality (\ref{isaMaara}), we may assume that $i+j\geq n-1$. Moreover, due to Inequality (\ref{Apuyhtälö}), we obtain $1/i+1/j$ attains its maximum value when $i=3$ and $j\geq n-4$ and, in that case, $1/i+1/j\leq 1/3+1/(n-4)=(n-1)/(3(n-4))$.
If $n\geq16$, then $(n-1)/(3(n-4))\leq\frac{5}{12}$ and $n/2+1/2+(n-1)/(3(n-4))\leq n/2+11/12\leq n/2+(n-2)/(n-1)$ as in (\ref{n-4Lasta}). Moreover, the cases $n\in[10,15]$ are verified by substituting the corresponding value of $n$ to the inequality.
Therefore, we have \begin{equation}\label{n-3Lasta}
s(\bc)\leq2+ \frac{n-3}{2}+\frac{n-1}{3(n-4)}=\frac{n}{2}+\frac{1}{2}+\frac{n-1}{3(n-4)}\leq\begin{cases}
\frac{n}{2}+1, &\text{ if } 10\leq n\leq12\\
\frac{n}{2}+1+\frac{1}{n^2-5n}-\frac{2}{3n}, &\text{ if } 13\leq n.\end{cases}
\end{equation}Therefore, the claims $1$, $2$ and $3$ follow from this result.

Now we may consider the case where the codeword $\bc$ is of Type \ref{5}. Since there is an orphan and $n-2$ sons, there is exactly one $F_{n-1}$-father in $N[\bc]$. Therefore, we have \begin{equation}\label{n-2Lasta}
s(\bc)=2+ \frac{n-2}{2}+\frac{1}{n-1}=\frac{n}{2}+1+\frac{1}{n-1}.
\end{equation}
Therefore, claim $1$, the only claim which concerns this case, follows.

Finally, we have the case where the codeword $\bc$ is of Type \ref{6}. Since there are at least $n-1$ sons and one father, it is impossible to simultaneously have no orphans in $N[\bc]$ and no codewords in $N(\bc)$. Hence, the proof is complete.\end{proof}


We have now shown that if $n \geq10$, then we have $s(\bc)\leq n/2+1+1/(n-1)$ for each $\bc\in C$. However, only a codeword of Type \ref{5} can have share which is greater than $n/2+1$. From now on we will call codewords of Type \ref{5}  \textit{special codewords} and fathers  neighbouring them \textit{special fathers}. Notice that the codeword $\bc$ is special if and only if there are exactly $n-2$ sons, one non-codeword $F_{n-1}$-father and one orphan in $N(\bc)$. A special codeword has share of $n/2+1+1/(n-1)$. Moreover, if a father is special, then it is a non-codeword and an $F_{n-1}$-father. Hence, we inspect special codewords closer and show that their existence means that there exist codewords with smaller share nearby. Thus, we may consider some rules which even out the share among codewords in some selected subset of codewords. Indeed, if we consider Equation (\ref{shareIdea}), then we immediately notice, that we can first even out the share among the selected subset of codewords as long as the total share stays constant (which is $2^n$) and only after that check whether $s(\bc)\leq \alpha$ for each $\bc\in C$. For this we give the following rule.


\begin{enumerate}\label{sääntö1}
\item[\namedlabel{Rule1}{\textbf{Rule} $1$}:] Let $\bc\in C$ be a special codeword and word $\bu$ be the special $F_{n-1}$-father in $N(\bc)$. Moreover, let $C'\subseteq I(\bu)$ be the set of codewords which do not have multiple special fathers in their neighbourhood.
We average out the shares among the codewords in $C'$, that is, the new share for the codewords in $C'$ becomes $$\frac{\sum_{\ba\in C'} s(\ba)}{|C'|}.$$
\end{enumerate}

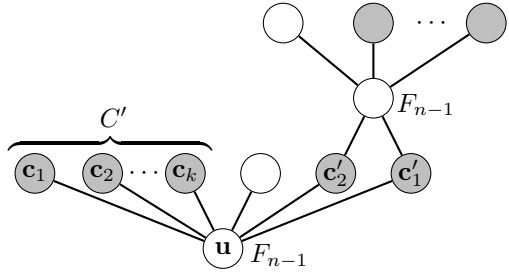
\begin{figure}
\centering
\begin{tikzpicture}


\node[main node](1) at (0,0)    {$ \bu $};
\node[code node](2) at (-2.5,1) {$\bc_1$};
\node[code node](3) at (-1.6,1) {$\bc_2$};
\node[code node](4) at (-0.5,1) {$\bc_k$};
\node[code node](5) at (2.5,1)  {$\bc_1'$};
\node[code node](6) at (1.5,1)  {$\bc_2'$};
\node[main node](7) at (0.5,1)  {};
\node[main node](8) at (2,2)    {};
\node[code node](9) at (3.5,3)  {};
\node[code node](10) at (2.0,3) {};
\node[main node](11) at (0.8,3) {};

\node[] at (-1.02,1) {$\cdots$};
\node[] at (2.8,3) {$\cdots$};
\node[] at (-1.5,1.4) {$\overbrace{\hspace{2.7cm}}$};
\node[] at (-1.45,1.75) {$C'$};
\node[] at (2.7,1.9) {$F_{n-1}$};
\node[] at (0.75,-0.1) {$F_{n-1}$};

\path[draw,thick]
    (1) edge node {} (2)
    (1) edge node {} (3)
    (1) edge node {} (4)
    (1) edge node {} (5)
    (1) edge node {} (6)
    (1) edge node {} (7)
    (8) edge node {} (5)
    (8) edge node {} (6)
    (8) edge node {} (9)
    (8) edge node {} (10)
    (8) edge node {} (11)

    ;

\end{tikzpicture}\centering
\caption{Gray vertices are codewords. Let $\bc=\bc_1$ be special and $\bc_i\in C'$ for $i\in[1,k]$. Now, \ref{Rule1} averages out the share between codewords $\bc_1, \bc_2,\dots,\bc_k$.}\label{Rule1Esim.}
\end{figure}

We have illustrated \ref{Rule1} in Figure \ref{Rule1Esim.}. Notice that if $\bc$ is a special codeword, then it has exactly one $F_{n-1}$-father in $N[\bc]$ and that father is special. Moreover, the set $C'$ consists of those codewords which have exactly one special father in their neighbourhood and hence, each codeword takes part in at most one averaging process when we apply \ref{Rule1}. Next, we present the lower bound which we can obtain with \ref{Rule1}. This bound is already an improvement to the previous lower bound presented in Theorem \ref{VanhaRaja}.


\begin{theorem}\label{perusraja}
Let $n\geq10$ and $C$ be a locating-dominating code. After applying \ref{Rule1}, each codeword $\bc\in C$ has $$s(\bc)\leq \frac{n}{2}+1.$$ 
\end{theorem}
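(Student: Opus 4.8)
The plan is to reduce the statement to a local inequality around each special father and then to balance the small excess of the special codewords against the deficits of their neighbours. By Lemma~\ref{TyyppiLemma}, every codeword already satisfies $s(\bc)\le n/2+1$ unless it is of Type~\ref{5}, in which case $s(\bc)=n/2+1+1/(n-1)$; so a codeword can violate the claimed bound before \ref{Rule1} is applied only if it is special, and then by exactly $1/(n-1)$. Since \ref{Rule1} replaces the shares on $C'\subseteq I(\bu)$ by their average and leaves all other shares untouched, and since every codeword lies in at most one such $C'$, it suffices to prove, for each special $F_{n-1}$-father $\bu$, that
\[
\sum_{\ba\in C'} s(\ba)\ \le\ |C'|\Big(\tfrac n2+1\Big),
\qquad\text{equivalently}\qquad
\sum_{\ba\in C'}\Big(s(\ba)-\tfrac n2-1\Big)\le 0 .
\]

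First I would pin down the local geometry. Write $I(\bu)=\{\bc_1,\dots,\bc_{n-1}\}$ with $\bc_i=\bu+\e_{l_i}$, so that the codeword directions are $[1,n]\setminus\{j_0\}$ for the single remaining direction $j_0$, and $\bw:=\bu+\e_{j_0}$ is the unique non-codeword neighbour of $\bu$. For a special $\bc_i$ I would check, via Lemma~\ref{palloleikkaus} and $I(\bc_i)=\{\bc_i\}$, that every one of its $n-2$ sons has $\bu$ as its unique father, hence pairs $\bc_i$ with some $\bc_j$; this forces the orphan of $\bc_i$ to sit at $\bw+\e_{l_i}$ and identifies the son partnering $\bc_i$ with $\bc_j$ as $\bu+\e_{l_i}+\e_{l_j}$. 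Consequently each member of $I(\bu)$ sees $\bu$ as an $F_{n-1}$-father, contributing only $1/(n-1)$ to its share, and if $p$ denotes the number of special codewords in $I(\bu)$ then every non-special $\bc_k\in I(\bu)$ has at least $p$ sons. The decisive structural point is that a \emph{second} father of such a partner can never lie in a special direction: a neighbour $\bu+\e_{l_k}+\e_{l_{k'}}$ with $\bc_{k'}$ special is, by the previous sentence, a son and not a father. Thus special codewords are expensive (self-contribution $1$ plus a genuine orphan) but they also crowd out the directions in which their partners could cheaply collect extra fathers or orphans.

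With this in hand I would do the deficit accounting. The $p$ special codewords contribute total excess $p/(n-1)$. For a non-special partner $\bc_k$, a type-by-type computation (Types \ref{1}--\ref{4}, with $\bu$ inserted as the $F_{n-1}$-father) shows: a partner carrying a codeword neighbour is of Type~\ref{2}, so its self-contribution drops to $\le 1/2$ and its deficit is at least $1/2-1/(n-1)$, improving to at least $1-1/(n-1)$ when it carries no orphan; and the only way a partner keeps a small deficit while avoiding Type~\ref{2} (i.e.\ Type~\ref{4}) is to possess a second father, which either lies in a non-special direction or equals $\bw+\e_{l_k}$, and in both cases is supported by codewords in non-special directions. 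Feeding this back in, $p$ is large exactly when non-special directions — and hence cheap fathers and orphans for the partners — are scarce, so the per-partner deficit grows with $p$. Using that domination of $\bw$ forces at least one Type~\ref{2} partner, one expects $\sum_{\text{partners}}(\text{deficit})\ge p/(n-1)$ in every case, with equality in the extreme configurations (for instance $p=n-2$ with a single partner having $n-2$ sons and no orphan, or $p=n-3$ with two partners each of deficit $1/2-1/(n-1)$).

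The hard part is precisely this balance: turning the qualitative trade-off ``many special codewords $\Rightarrow$ few non-special directions $\Rightarrow$ large partner deficits'' into a clean inequality valid uniformly for $n\ge10$, since the extremal configurations meet the bound with equality and leave no slack. I expect the cleanest route is to lower-bound the total deficit by a function of $p$ and $q=|I(\bu)|-p$ via the son count and the scarcity of extra fathers, then verify $q\cdot(\text{minimal deficit})\ge p/(n-1)$, treating the small-$q$ cases (where partners are forced to shed their orphans) separately. Two technical points must still be absorbed: partners excluded from $C'$ because they have two special fathers — these keep their original share and must be shown unnecessary for offsetting $\bu$'s excess — and the smallest dimensions $n\in\{10,11,12\}$, where the constants are tightest and are best checked by direct substitution.
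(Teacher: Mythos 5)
Your reduction to the per-father inequality $\sum_{\ba\in C'} s(\ba)\le |C'|\left(\frac n2+1\right)$ and your structural observations (each son of a special codeword in $I(\bu)$ has $\bu$ as its unique father and pairs two codewords of $I(\bu)$; domination of the unique non-codeword neighbour $\bw$ forces a Type~\ref{2} partner) match the paper's setup exactly. But the proposal has a genuine gap at its centre: the balancing inequality is left at the level of ``one expects,'' and you yourself call it ``the hard part.'' In fact no $p$-versus-$q$ accounting is needed at all, and the observation you are missing is that the forced Type~\ref{2} partner automatically carries \emph{no orphan}: if $\bw+\e_{l_k}\in C$ dominates $\bw$ (note it cannot dominate $\bw$ from a special direction, since $\bw+\e_{l_i}$ is the orphan of a special $\bc_i$), then every other neighbour $\bu+\e_{l_k}+\e_{l_m}$ of $\bc_k=\bu+\e_{l_k}$ is dominated by the two codewords $\bc_k$ and $\bc_m$, while $\bc_k+\e_{j_0}$ is itself a codeword; hence $s(\bc_k)\le \frac{1}{n-1}+\frac n2$, a deficit of $1-\frac{1}{n-1}=\frac{n-2}{n-1}$, not merely the $\frac12-\frac{1}{n-1}$ you assign it. Since $C'\subseteq I(\bu)$ gives $|C'|\le n-1$ and $\bc_k$ is not special, the total excess of the special codewords in $C'$ is at most $\frac{n-2}{n-1}$, so this \emph{single} partner's deficit covers the excess in every case, with equality when $|C'|=n-1$ --- the tightness you worried about is harmless, and no separate treatment of $p$ large, of other partners, or of small $n$ is required. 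This is precisely how the paper's proof of Theorem~\ref{perusraja} runs.

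The second gap is the point you flag but do not attempt: you must show the deficit partner actually belongs to $C'$, i.e.\ has no second special father; otherwise its deficit is unavailable and $C'$ could in principle consist only of special codewords, making the average $\frac n2+1+\frac{1}{n-1}$. This is where the paper spends its real effort. In its normalization ($\bu=\nolla$, $\e_1$ special, $\bw=\e_n$, $\bc_k=\e_2$), a putative second special father must be $\e_2+\e_i$ with $i\in[3,n-1]$, say $\e_2+\e_3$; using that $\e_1+\e_2+\e_3\notin C$ (sons of the special $\e_1$ have both their codeword neighbours in $N(\bu)$), one determines $I(\e_2+\e_3)=\{\e_2,\e_3,\e_2+\e_3+\e_4,\dots,\e_2+\e_3+\e_n\}$ and checks that none of these can be special: $\e_2,\e_3$ have two adjacent $F_{n-1}$-fathers, each $\e_2+\e_3+\e_j$ with $j\in[4,n-1]$ has multiple adjacent fathers via $\{\e_2,\e_j,\e_2+\e_3+\e_j\}\subseteq I(\e_2+\e_j)$, and $\e_2+\e_3+\e_n$ has the codeword neighbour $\e_2+\e_n$. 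So $\e_2+\e_3$ is not a special father, a contradiction. This argument is not routine bookkeeping, and without it (and the no-orphan observation above) your proposal does not close.
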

\begin{proof}

Let $C$ be a locating-dominating code in $\F^n$ and $n\geq10$. If there are no special codewords, then we are ready since $s(\bc)\leq n/2+1$ by Lemma \ref{TyyppiLemma}. Assume then that there exists at least one special codeword.
Let us assume that the all-zero word $\nolla$ is a special $F_{n-1}$-father, denote $\bu=\nolla$, and $\e_1\in C$ is a special codeword (in Figure \ref{Rule1Esim.} codeword $\bc_1$ corresponds to $\e_1$). Hence, $\bu\not\in C$ and exactly one of words $\e_i$, $i\geq2$, is a non-codeword. Let us say $\e_n\not\in C$. In addition, since $\bu\not\in C$ and $I(\e_n)\neq \emptyset$, at least one of the words $\e_n+\e_i$, $i\in [2,n-1]$, is a codeword, say $\e_2+\e_n\in C$ and denote $\bv=\e_2+\e_n$. Notice that $N[\e_2]=\{\bu, \e_2,\e_2+\e_1,\e_2+\e_3,\dots, \e_2+\e_n\}$ and thus, there are no orphans in $N[\e_2]$ and there is a codeword $\bv\in C$ next to $\e_2$. Now, we have \begin{equation}\label{e_2share}
s(\e_2)\leq \frac{1}{n-1}+\frac{2}{2}+\frac{n-2}{2}=\frac{n}{2}+\frac{1}{n-1}.
\end{equation} Moreover, let us denote by $C'\subseteq I(\bu)$ the set of codewords without multiple neighbouring special fathers. Notably $\e_1\in C'$ and the same is true for all other special codewords in $I(\bu)$ if there are any. Let us first assume that $\e_2\in C'$. Thus, Lemma \ref{TyyppiLemma} gives $$\sum_{\e_i\in C'}s(\e_i)\leq  (|C'|-1)\left(\frac{n}{2}+1+\frac{1}{n-1}\right)+\frac{n}{2}+\frac{1}{n-1}\leq|C'|\left(\frac{n}{2}+1\right).$$ Hence, if we apply \ref{Rule1} where $\bc$ corresponds to $\e_1$ and $\bu$ to $\nolla$, then averaging out the shares gives share of at most $\frac{n}{2}+1$ for each codeword in $C'$. 


Next we finalize the proof by showing that $\e_2$ always belongs to $C'$. Assume to the contrary that $\e_2\notin C'$.
Let $\e_2+\e_i$ be a special $F_{n-1}$-father other than $\nolla$. Note that $i\neq 1$ since $\e_1$ is special and $i\neq2$ since $\e_2+\e_i\neq\bu$. Moreover, if $i=n$, then $\e_2+\e_i=\bv$ is a codeword and thus, it cannot be a special father, a contradiction. Hence, we may assume that $3\leq i\leq n-1$. Without loss of generality, let $i=3$. We also notice that $\e_1+\e_2+\e_3\not\in C$ since $\e_1$ is special and sons in $N(\e_1)$ have their two codeword neighbours in $N(\bu)$. Now, we have $I(\e_2+\e_3)=\{\e_2,\e_3, \e_2+\e_3+\e_4,\dots,\e_2+\e_3+\e_{n}\}$. In the following, we show that there are no special codewords in $I(\e_2+\e_3)$ and hence, $\e_2+\e_3$ is not a special father. First of all, $\e_2$ and $\e_3$ are not special due to the fact that they have two $F_{n-1}$-fathers in their neighbourhood. Moreover, we have $\{\e_2,\e_j,\e_2+\e_3+\e_j\}\subseteq I(\e_2+\e_j)$ for each $j\in [4,n-1]$. Hence, none of codewords $\e_2+\e_3+\e_j$ is special when $j\in[4,n-1]$ since they have multiple adjacent fathers. Finally, we have $\e_2+\e_n\in I(\e_2+\e_3+\e_n)$ and hence, $\e_2+\e_3+\e_n$ is not a special codeword. Therefore, we do not have any special codewords in $N[\e_2+\e_3]$ and $\e_2+\e_3$ is not a special father. Hence, we have a contradiction and, thus, $\e_2\in C'$. This completes the proof.\end{proof}

The next result follows now immediately from Equation (\ref{shareIdea2}).

\begin{corollary}\label{perusCorollary}
Let $n\geq10$. We have $$\gamma^{LD}(\F^n)\geq \frac{2^{n+1}}{n+2}.$$
\end{corollary}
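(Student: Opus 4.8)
The plan is to derive Corollary~\ref{perusCorollary} directly from Theorem~\ref{perusraja} by combining the per-codeword share bound with the global share identity \eqref{shareIdea}. Theorem~\ref{perusraja} guarantees that, \emph{after} applying \ref{Rule1}, every codeword $\bc\in C$ satisfies $s(\bc)\leq n/2+1$. The crucial point I would emphasize first is that averaging via \ref{Rule1} only redistributes share among codewords and does not change the total: each application of \ref{Rule1} replaces the shares of the codewords in some $C'$ by their common average, so $\sum_{\ba\in C'}s(\ba)$ is preserved, and since (as noted after the statement of \ref{Rule1}) each codeword takes part in at most one averaging process, the global sum $\sum_{\bc\in C}s(\bc)$ is unchanged and still equals $2^n$ by \eqref{shareIdea}.

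With this in hand the argument is essentially the instantiation of \eqref{shareIdea2} with $\alpha=n/2+1$. First I would note that $C$ is an arbitrary locating-dominating code in $\F^n$ with $n\geq10$, so Theorem~\ref{perusraja} applies and gives the uniform bound $s(\bc)\leq n/2+1$ for every $\bc\in C$ after averaging. Summing this inequality over all $\bc\in C$ yields
$$2^n=\sum_{\bc\in C}s(\bc)\leq |C|\left(\frac{n}{2}+1\right)=|C|\cdot\frac{n+2}{2}.$$
Rearranging gives $|C|\geq 2^{n+1}/(n+2)$. Since this holds for every locating-dominating code $C$, in particular for an optimal one, we conclude $\gamma^{LD}(\F^n)\geq 2^{n+1}/(n+2)$.

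I do not anticipate any genuine obstacle here, since the statement is flagged in the excerpt as following ``immediately'' from \eqref{shareIdea2}; the only subtlety worth stating explicitly is the invariance of the total share under \ref{Rule1}, so that applying the averaged bound back through \eqref{shareIdea} is legitimate. Everything else is the routine manipulation $2^n\leq |C|(n/2+1)$, and the bound transfers to $\gamma^{LD}(\F^n)$ because it is valid for all locating-dominating codes and hence for the minimum-cardinality one.
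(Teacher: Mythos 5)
Your proof is correct and takes essentially the same route as the paper, which derives the corollary immediately from Inequality (\ref{shareIdea2}) with $\alpha=n/2+1$ supplied by Theorem~\ref{perusraja}. Your explicit justification that \ref{Rule1} only redistributes share so that $\sum_{\bc\in C}s(\bc)=2^n$ still holds is precisely the invariance the paper itself notes just before stating \ref{Rule1}, so nothing is missing or different in substance.
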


\section{The second bound}

Observe that there are still some codewords which have share less than $n/2+1$, namely the codeword $\bv=\e_2+\e_n$, which is of Type \ref{2}, and all other codewords of Types \ref{1}, \ref{2}, \ref{3}, \ref{4} and \ref{6} also have smaller shares when $n\geq11$. Now our goal is to show the following theorem.
\begin{theorem}\label{parasraja}
We have $$\gamma^{LD}(\F^n)\geq \begin{cases}\frac{2^{n+1}}{n+1+2(n-1)/(3(n-4))}, &\text{if } 11\leq n\leq12\\
\frac{2^{n+1}}{n+2+2/(n^2-5n)-4/(3n)},&\text{if }13\leq n.\end{cases}$$
\end{theorem}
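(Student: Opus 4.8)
The plan is to refine the share bound from Theorem~\ref{perusraja} by exploiting the codewords that are known to carry strictly less than $n/2+1$ of share. Recall that Theorem~\ref{perusraja} already yields $s(\bc)\le n/2+1$ for every codeword after applying \ref{Rule1}. To obtain the sharper bound $\gamma^{LD}(\F^n)\ge 2^{n+1}/\alpha$ with $\alpha=n/2+1/2+(n-1)/(3(n-4))$ (for $11\le n\le 12$) or the corresponding $\alpha$ built from the $n^2-5n$ term (for $n\ge13$), I would design one or two additional averaging rules in the spirit of \ref{Rule1} that push share \emph{away} from the codewords still sitting at the maximal value $n/2+1$ and onto the codewords with provable slack. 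The key observation, already visible in the proof of Lemma~\ref{TyyppiLemma}, is that the codewords attaining $n/2+1$ exactly are the Type~\ref{4} codewords (and, after Rule~$1$ redistribution, the former special codewords), whose share bound $\frac{n}{2}+\frac12+\frac{n-1}{3(n-4)}$ from \eqref{n-3Lasta} is strictly below $n/2+1$ once $n\ge11$; so in fact the true per-codeword maximum after Rule~$1$ should already be the target $\alpha$, and the task is to verify this uniformly.

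First I would re-audit each codeword Type against the target $\alpha$ rather than against $n/2+1$. For Types \ref{1}, \ref{2}, \ref{3} the established estimate $s(\bc)\le n/2+5/6$ (resp.\ the $n/2+11/12$ bound of \eqref{n-4Lasta}) is already below $\alpha$ for all $n\ge11$, so these need no further work. For Type~\ref{4} codewords, \eqref{n-3Lasta} gives exactly $s(\bc)\le\frac{n}{2}+\frac12+\frac{n-1}{3(n-4)}=\alpha$ in the range $11\le n\le12$, which is precisely the denominator appearing in the theorem; for $n\ge13$ the same inequality gives the second branch. The only genuinely problematic codewords are the special codewords of Type~\ref{5}, which before redistribution have share $n/2+1+1/(n-1)$ exceeding $\alpha$. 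So the heart of the argument is to show that \ref{Rule1} (possibly supplemented) lowers their post-averaging share to at most $\alpha$, not merely to $n/2+1$.

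The main obstacle, then, is the redistribution bookkeeping for special codewords. In Theorem~\ref{perusraja} the averaging over $C'$ used only the single slack codeword $\e_2$ with $s(\e_2)\le n/2+1/(n-1)$, which was enough to reach $n/2+1$ but not $\alpha$. To sharpen this I would locate \emph{additional} low-share codewords in $I(\bu)=C'$ — for instance the codeword $\bv=\e_2+\e_n$ of Type~\ref{2}, explicitly flagged in the paragraph preceding the theorem as having share below $n/2+1$ — and fold their slack into the same averaging pool, or introduce a Rule~$2$ that transfers a controlled amount of share from each special codeword to its guaranteed slack neighbours. The delicate point is ensuring these slack codewords are not double-counted across several special fathers; the structural analysis at the end of the proof of Theorem~\ref{perusraja}, showing that $\e_2$ cannot simultaneously be adjacent to two distinct special $F_{n-1}$-fathers, is exactly the kind of lemma I would need to extend so that each donor codeword contributes its slack to at most one averaging process.

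Once every codeword is shown to satisfy $s(\bc)\le\alpha$ after the (possibly augmented) redistribution, the theorem follows immediately from the share identity \eqref{shareIdea}: since averaging preserves $\sum_{\bc\in C}s(\bc)=2^n$, the uniform bound gives $|C|\,\alpha\ge 2^n$, hence $|C|\ge 2^n/\alpha=2^{n+1}/(2\alpha)$, and substituting the two expressions for $2\alpha$ — namely $n+1+2(n-1)/(3(n-4))$ and $n+2+2/(n^2-5n)-4/(3n)$ — yields the two cases of the stated bound. I would finish by checking the small cases $n\in\{11,12\}$ and the boundary $n=13$ by direct substitution, exactly as the proof of Lemma~\ref{TyyppiLemma} verifies its range-restricted inequalities, to confirm that the piecewise definition of $\alpha$ is tight at the transition.
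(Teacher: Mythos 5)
There is a genuine gap, and it sits exactly where your plan becomes vague. Your central claim --- that after \ref{Rule1} ``the true per-codeword maximum should already be the target $\alpha$'' and the remaining task is mere verification --- is false, because the averaging estimate in Theorem \ref{perusraja} is tight. In the extremal configuration (the paper's \emph{sparse father}: $\bu=\nolla$ a special father with $|I_3(\bu)|=n$), the pool $C'=I(\bu)$ consists of $n-2$ special codewords, each of share $n/2+1+1/(n-1)$, together with the single codeword $\e_2$ of share at most $n/2+1/(n-1)$; the sum is exactly $(n-1)(n/2+1)$, so \ref{Rule1} alone provably cannot get below $n/2+1$. Re-auditing Lemma \ref{TyyppiLemma} against $\alpha$ does not help, since the bottleneck is the averaged pool, not any single Type. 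Moreover, your proposed extra donor $\bv=\e_2+\e_n$ is \emph{not} in $C'$: by definition $C'\subseteq I(\bu)$ while $d(\bu,\bv)=2$, so folding $\bv$ into the pool is not a supplement to \ref{Rule1} but a new averaging over $I_3(\bu)$ (the paper's \ref{Rule23}), and it is only legitimate after substantial structural work: one must show the special codeword has a \emph{unique} special father within distance two (Lemma \ref{Ein-1isiäB2ssa}), that $|I_3(\bu)|=n$ --- otherwise Lemma \ref{nKoodisanaaB3:ssa} must separately deliver $s(\bc)\le n/2+1-1/(n-1)$ through a seven-case analysis of the possible extra codewords in $B_3(\nolla)$ --- and that each codeword participates in such an averaging at most once.

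Worse, $\bv$ does not come with guaranteed slack, which your plan assumes. If $N(\bv)$ contains a single $F_{n-2}$-father and $n-3$ sons, then $s(\bv)$ can be as large as $n/2+1/2+1/(n-2)$, and since $1/2+1/(n-2)>1/3+1/(n-5)$ for $n\ge 11$, averaging this over $I_3(\bu)$ overshoots the target $n/2+1+1/(n^2-5n)-2/(3n)$, which requires precisely $s(\bv)\le n/2+1/3+1/(n-5)$ to make the identity $(n-1)\bigl(\frac{n}{2}+1\bigr)+\frac{n}{2}+\frac{1}{3}+\frac{1}{n-5}=n\bigl(\frac{n}{2}+1+\frac{1}{n^2-5n}-\frac{2}{3n}\bigr)$ close. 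The paper therefore needs \ref{Rule22}, draining $(1/6+1/(n-2)-1/(n-5))/(n-3)$ of share from $\bv$ to each other codeword of $I(\ba)$ for the unique $F_{n-2}$-father $\ba\in N(\bv)$, and then the hardest verifications of the whole argument: that the recipients stay under the bound even when they receive from up to $n-3$ such fathers and simultaneously from \ref{Rule1} (Lemma \ref{22 siirto}, Subcases $A$--$C$), and that neither rule ever pushes share \emph{into} $\bv$ (Lemma \ref{EiShareaVlle}). Your proposal correctly identifies the special codewords as the obstruction, anticipates a no-double-counting lemma in the spirit of the $\e_2\in C'$ argument, and gets the final arithmetic right (both denominators are indeed $2\alpha$), but the redistribution mechanism you gesture at is never constructed, and in the critical sparse configuration the simple version you describe demonstrably fails.
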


To prove this theorem, we have to consider the locating-dominating code and the Hamming space in more detail. Again $\nolla$ is a special $F_{n-1}$-father, $\e_1\in C$ is a special codeword, $\e_n\not\in C$ and $\bv=\e_2+\e_n\in C$. We will show that after some share shifting, we have $s(\bc)\leq n/2+1+1/(n^2-5n)-2/(3n)$ which gives Theorem \ref{parasraja} when $n\geq13$. Lemma \ref{TyyppiLemma} has already shown that when $n\geq13$, only the special codewords have share greater than $n/2+1+1/(n^2-5n)-2/(3n)$. We will proceed by first showing that if we have multiple $F_{n-1}$-fathers 
in $B_2(\nolla)$, then, after applying \ref{Rule1}, we have $s(\bc)<n/2+1-1/(n-1)$, which is less than $n/2+1+1/(n^2-5n)-2/(3n)$ for each $\bc\in I(\nolla)$ by (\ref{ApuEYht}). After that we use similar deduction to show that $|I_3(\nolla)|=n$. Finally, we implement some rules to shift share into $\bv$ and out of $\bv$.

\begin{lemma}\label{Ein-1isiäB2ssa}
Let $C\subseteq \F^n$ and $n\geq11$. Let $\bu$ and $\bx$ special fathers, $d(\bu,\bx)\leq 2$ and $\bc$ be a special codeword in $I(\bu)$. We have $s(\bc)\leq n/2+1-1/(n-1)$ after applying \ref{Rule1}.
\begin{proof}
Let $\bu=\nolla$ and $\bx\neq \nolla$ be special $F_{n-1}$-fathers in $B_2(\nolla)$. Hence, $\bu,\bx\not\in C$. Moreover, let us assume, without loss of generality, that $\e_1$ is a special codeword, $\e_n$ is the non-codeword in $N(\bu)$ and $\e_2+\e_n\in C$. As we have seen in the proof of Theorem \ref{perusraja}, $\bx$ cannot be in $N[\e_1]$ or $N[\e_2]$ and hence, $\e_1,\e_2\in C'$. Moreover, if $\bx=\e_n$, then $\e_1+\e_n\in C$ since $\nolla\not\in C$. However, this is not possible, since $\e_1$ is special. Hence, we have $w(\bx)=2$. If we have $\bx=\e_i+\e_n$, where $3\leq i\leq n-1$, then $\e_1+\e_i+\e_n\in C$ since $\e_n\not\in C$. Again, this is not possible since $\e_1$ is a special codeword and $\e_1+\e_i$ is a son. Therefore, we have $$\bx=\e_i+\e_j$$ where $3\leq i,j\leq n-1$ and $i\neq j$. Without loss of generality, let us say $i=3$ and $j=4$. Moreover, $I(\e_3+\e_4)=\{\e_3+\e_4+\e_h\mid 2\leq h\leq n\}$ since $\e_1$ is special and $\e_1+\e_3$ is a son. Furthermore, we have a special codeword in $I(\bx)$. However, codewords $\e_3$ or $\e_4$ cannot be special since they have two $F_{n-1}$-fathers in their neighbourhoods. Moreover, codewords $\e_3+\e_4+\e_h$, $h\in [2,n-1]$ and $h\neq 3,4$, cannot be special since $\e_3+\e_h\in N[\e_3+\e_4+\e_h]$ and $\{\e_3,\e_h,\e_3+\e_4+\e_h\}\subseteq I(\e_3+\e_h)$. Therefore, $\e_3+\e_4+\e_n$ is special.

Let $C'\subseteq I(\nolla)$ be the set of codewords which do not have multiple neighbouring special fathers. Notice that $\e_3,\e_4\not\in C'$. We have $\e_1,\e_2\in C'$ as we have seen above. If $\e_t\in C'$ for some $t\in[5,n-1]$, then we have $\{\e_t,\e_3,\e_3+\e_4+\e_t\}\subseteq I(\e_t+\e_3)$. Similarly, we get $|I(\e_t+\e_4)|\geq3$. Thus, $$s(\e_t)\leq \frac{1}{n-1}+2\cdot\frac{1}{3}+2+\frac{n-4}{2}=\frac{n}{2}+\frac{2}{3}+\frac{1}{n-1}< \frac{n}{2}+1-\frac{1}{n-1}.$$ Moreover, since $\e_2+\e_3$ and $\e_2+\e_4$ are fathers and $\e_2+\e_n\in C$, we have $$s(\e_2)\leq \frac{1}{n-1}+2\cdot\frac{1}{2}+2\frac{1}{3}+\frac{n-4}{2}=\frac{n}{2}-\frac{1}{3}+\frac{1}{n-1}.$$ Therefore, we have \begin{align*}\sum_{\bc'\in C'}s(\bc')\leq &\left(\frac{n}{2}+1+\frac{1}{n-1}\right)+\left(\frac{n}{2}-\frac{1}{3}+\frac{1}{n-1}\right)+(|C'|-2)\left(\frac{n}{2}+1-\frac{1}{n-1}\right)\\<&|C'|\left(\frac{n}{2}+1-\frac{1}{n-1}\right).
\end{align*}\end{proof}
\end{lemma}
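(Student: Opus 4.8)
The statement to prove is Lemma~\ref{Ein-1isiaB2ssa}: if $\bu$ and $\bx$ are two distinct special fathers at distance at most $2$ and $\bc$ is a special codeword in $I(\bu)$, then after applying \ref{Rule1} the share drops to $s(\bc)\leq n/2+1-1/(n-1)$. My plan is to fix a convenient coordinate frame and then pin down, step by step, exactly where the second special father $\bx$ can sit, using the rigid structure forced by the fact that $\bc$ is special.

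\textbf{Step 1: Normalize coordinates.}
First I would set $\bu=\nolla$ and exploit the symmetry of $\F^n$ to place the special codeword as $\e_1$, the unique non-codeword in $N(\bu)$ as $\e_n$, and (since $\bu\notin C$ forces some neighbour of $\e_n$ to be a codeword) a codeword as $\e_2+\e_n$. This is the same setup as in the proof of Theorem~\ref{perusraja}, so I may reuse the conclusion there that $\e_1,\e_2\in C'$, i.e.\ neither lies in $N[\bx]$ and neither has a second special father.

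\textbf{Step 2: Locate $\bx$.}
The heart of the argument is a case analysis on $\bx$ ruling out all positions except weight-two words supported on $[3,n-1]$. I would argue: $w(\bx)=1$ is impossible, because the only remaining weight-one word is $\e_n$, and $\bx=\e_n$ would force $\e_1+\e_n\in C$ (as $\nolla\notin C$), contradicting that $\e_1$ is special. For weight-two words $\e_i+\e_n$ with $3\leq i\leq n-1$, the non-codeword $\e_n$ again forces $\e_1+\e_i+\e_n\in C$, making $\e_1+\e_i$ a genuine son rather than lying in $N(\bu)$, which contradicts the special structure of $\e_1$. This leaves $\bx=\e_i+\e_j$ with $3\leq i,j\leq n-1$, $i\neq j$, and by symmetry I would take $\bx=\e_3+\e_4$. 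Then I must identify which codeword in $I(\bx)$ is the special one: $\e_3,\e_4$ each have two $F_{n-1}$-fathers so cannot be special, and each $\e_3+\e_4+\e_h$ for $h\in[2,n-1]$, $h\neq 3,4$ has the extra father $\e_3+\e_h$ (since $\{\e_3,\e_h,\e_3+\e_4+\e_h\}\subseteq I(\e_3+\e_h)$), so is not special either; hence the special codeword in $I(\bx)$ must be $\e_3+\e_4+\e_n$.

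\textbf{Step 3: Bound the shares and average.}
Now I know $\e_3,\e_4\notin C'$ but $\e_1,\e_2\in C'$, and for any $\e_t\in C'$ with $t\in[5,n-1]$ both $\e_t+\e_3$ and $\e_t+\e_4$ have $I$-set of size at least $3$ (they are $F_i$-fathers), so $\e_t$ has two fathers contributing at most $1/3$ each, giving $s(\e_t)\leq \frac{1}{n-1}+\frac{2}{3}+2+\frac{n-4}{2}<\frac{n}{2}+1-\frac{1}{n-1}$. A sharper bound holds for $\e_2$, which has the two fathers $\e_2+\e_3,\e_2+\e_4$ plus the codeword neighbour $\e_2+\e_n$, yielding $s(\e_2)\leq \frac{n}{2}-\frac13+\frac{1}{n-1}$. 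Summing over $C'$, the one special codeword contributes at most $\frac{n}{2}+1+\frac{1}{n-1}$, $\e_2$ contributes its smaller share, and the remaining $|C'|-2$ codewords each contribute at most $\frac{n}{2}+1-\frac{1}{n-1}$; the $+\frac{1}{n-1}$ and $-\frac13$ surpluses cancel the single $+\frac{1}{n-1}$ deficit, so the total is strictly below $|C'|\bigl(\frac{n}{2}+1-\frac{1}{n-1}\bigr)$, and \ref{Rule1} redistributes this to give every averaged codeword, in particular the special $\bc$, a share at most $n/2+1-1/(n-1)$.

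\textbf{Main obstacle.}
The delicate part is Step 2: I expect the case exclusions to be the real work, since each one relies on translating ``$\e_1$ is special'' into a precise statement about which of the words $\e_1+\e_i$ are sons (and hence have both codeword neighbours in $N(\bu)$) versus codewords, and then forcing a contradiction from $\e_n\notin C$. Getting the quantifier ranges on $i,j,h$ exactly right — remembering to exclude the indices $1,2,n$ where the structure is already fixed — is where an off-by-one slip would break the argument. Once $\bx=\e_3+\e_4$ is established the share estimates in Step 3 are routine bookkeeping.
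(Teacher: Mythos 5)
Your proposal is correct and follows essentially the same route as the paper's own proof: the identical normalization ($\bu=\nolla$, $\e_1$ special, $\e_n\notin C$, $\e_2+\e_n\in C$), the same case analysis eliminating $\bx=\e_n$ and $\bx=\e_i+\e_n$ to force $\bx=\e_3+\e_4$ with special codeword $\e_3+\e_4+\e_n$, and the same share bounds $s(\e_t)\leq \frac{n}{2}+\frac{2}{3}+\frac{1}{n-1}$ and $s(\e_2)\leq \frac{n}{2}-\frac{1}{3}+\frac{1}{n-1}$ feeding into the final averaging under \ref{Rule1}. No gaps to flag.
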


From now on, we may assume that no codeword in $I(\nolla)$ has multiple special fathers in its neighbourhood. In particular, this makes applying \ref{Rule1} easier. Moreover, in the following we show, that we may assume that there are no codewords in $B_3(\nolla)$ except for $\bv=\e_2+\e_n$ and those in $I(\nolla)$.

\begin{lemma}\label{nKoodisanaaB3:ssa}
Let $n\geq11$, $\bu$ be a special father and $\bc\in I(\bu)$ be a special codeword. If $|I_3(\bu)|\geq n+1$, then we have $s(\bc)\leq n/2+1-1/(n-1)$ after applying \ref{Rule1}.
\begin{proof}
Let $\bu=\nolla$ and let $\bc'\in B_3(\nolla)$ be a codeword other than $\nolla$, any codeword in $I(\nolla)$ or $\bv$. Since $\e_1$ is a special codeword and $\nolla$ is a special father, we have seven possible cases for $\bc'$ as $w(\bc')\geq2$ and $d(\bc',\e_1)\geq3$. In particular, there are three different possibilities for $\bc'$ with $w(\bc')=2$ (see Cases $I-III$ below) and four different possibilities for $\bc'$ with $w(\bc')=3$ (Cases $IV-VII$). Moreover, we may assume by Lemma \ref{Ein-1isiäB2ssa} that no codeword in $I(\nolla)$ has multiple special fathers in its neighbourhood, $s(\e_q)\leq n/2+1+1/(n-1)$, for $q\neq 2,n$, and $$s(\e_2)\leq n/2+1/(n-1)$$ as seen in Inequality (\ref{e_2share}). Since additional codewords in $I_3(\nolla)$ cannot increase the share of codewords in $I(\nolla)$, we assume that there are exactly $n+1$ codewords in $I_3(\nolla)$. In the following, distinct indices $i,j,h$ belong to the set $[3,n-1]$: \begin{enumerate}[label=\Roman*.]
\item $\e_2+\e_i=\bc'$
\item $\e_n+\e_i=\bc'$
\item $\e_i+\e_j=\bc'$
\item $\e_2+\e_n+\e_i=\bc'$
\item $\e_2+\e_i+\e_j=\bc'$
\item $\e_n+\e_i+\e_j=\bc'$
\item $\e_i+\e_j+\e_h=\bc'$
\end{enumerate}

In  Case I, $\e_2$ and $\e_2+\e_i$ are $F_3$-fathers.  Thus, $$s(\e_2)\leq\frac{1}{n-1}+\frac{2}{3}+\frac{n-2}{2}=\frac{n}{2}-\frac{1}{3}+\frac{1}{n-1}$$ and since $\e_i$ is a son, we have $$s(\e_i)\leq \frac{1}{n-1}+\frac{1}{3}+1+\frac{n-2}{2}=\frac{n}{2}+\frac{1}{3}+\frac{1}{n-1}.$$ Recall, that by our assumptions $C'=\{\e_1,\dots,\e_{n-1}\}$ in \ref{Rule1}. Therefore, we have \begin{align*}
\sum_{k=1}^{n-1}s(\e_k)\leq& (n-3)\left(\frac{n}{2}+1+\frac{1}{n-1}\right)+\left(\frac{n}{2}+\frac{1}{3}+\frac{1}{n-1}\right)+\left(\frac{n}{2}-\frac{1}{3}+\frac{1}{n-1}\right)\\
=& (n-1)\left(\frac{n}{2}+1-\frac{1}{n-1}\right)
\end{align*}
and we are ready.

In Case II, $\e_i$ and $\e_n+\e_i$ form a couple. Hence,
$$s(\e_i)\leq \frac{1}{n-1}+\frac{2}{2}+\frac{n-2}{2}=\frac{n}{2}+\frac{1}{n-1}$$ and \begin{align*}
\sum_{k=1}^{n-1}s(\e_k)\leq& (n-3)\left(\frac{n}{2}+1+\frac{1}{n-1}\right)+2\left(\frac{n}{2}+\frac{1}{n-1}\right)\\
=& (n-1)\left(\frac{n}{2}+1-\frac{1}{n-1}\right).
\end{align*}

In Case III, $\e_i$ and $\e_j$ are sons and $\e_i+\e_j$ is an $F_3$-father. We have $$s(\e_i)\leq \frac{1}{n-1}+\frac{1}{2}+\frac{1}{3}+1+\frac{n-3}{2}=\frac{n}{2}+\frac{1}{3}+\frac{1}{n-1}$$  \text{ and same is true for } $\e_j$. Hence, \begin{align*}
\sum_{k=1}^{n-1}s(\e_k)\leq& (n-4)\left(\frac{n}{2}+1+\frac{1}{n-1}\right)+2\left(\frac{n}{2}+\frac{1}{3}+\frac{1}{n-1}\right)+\left(\frac{n}{2}+\frac{1}{n-1}\right)\\
=& (n-1)\left(\frac{n}{2}+1-\frac{1}{n-1}-\frac{1}{3(n-1)}\right).
\end{align*}

In Case IV, $\e_2+\e_n$ and $\e_2+\e_i$ are $F_3$-fathers. Thus, we have
$$s(\e_2)\leq \frac{1}{n-1}+\frac{2}{3}+\frac{1}{2}+\frac{n-3}{2}=\frac{n}{2}-\frac{1}{3}+\frac{1}{n-1}$$ and since $\e_2+\e_i$ is an $F_3$-father and $\e_n+\e_i$ is a son, we have
$$s(\e_i)\leq \frac{1}{n-1}+\frac{1}{3}+\frac{1}{2}+1+\frac{n-3}{2}=\frac{n}{2}+\frac{1}{3}+\frac{1}{n-1}.$$
Hence,
\begin{align*}
\sum_{k=1}^{n-1}s(\e_k)\leq& (n-3)\left(\frac{n}{2}+1+\frac{1}{n-1}\right)+\left(\frac{n}{2}+\frac{1}{3}+\frac{1}{n-1}\right)+\left(\frac{n}{2}-\frac{1}{3}+\frac{1}{n-1}\right)\\
=& (n-1)\left(\frac{n}{2}+1-\frac{1}{n-1}\right).
\end{align*}

In Case V, $\e_2+\e_i$, $\e_2+\e_j$ and $\e_i+\e_j$ are $F_3$-fathers. Thus, we have
$$s(\e_2)\leq \frac{1}{n-1}+\frac{2}{3}+\frac{2}{2}+\frac{n-4}{2}=\frac{n}{2}-\frac{1}{3}+\frac{1}{n-1},$$
$$s(\e_i)\leq \frac{1}{n-1}+\frac{2}{3}+2+\frac{n-4}{2}=\frac{n}{2}+\frac{2}{3}+\frac{1}{n-1}$$ and the same is true for $s(\e_j)$.
Hence,
\begin{align*}
\sum_{k=1}^{n-1}s(\e_k)\leq& (n-4)\left(\frac{n}{2}+1+\frac{1}{n-1}\right)+2\left(\frac{n}{2}+\frac{2}{3}+\frac{1}{n-1}\right)+\left(\frac{n}{2}-\frac{1}{3}+\frac{1}{n-1}\right)\\
=& (n-1)\left(\frac{n}{2}+1-\frac{1}{n-1}\right).
\end{align*}

In Case VI, $\e_i+\e_j$ is an $F_3$-father and $\e_i+\e_n$ and $\e_j+\e_n$ are sons. Thus, we have
$$s(\e_i)\leq \frac{1}{n-1}+\frac{1}{3}+\frac{1}{2}+1+\frac{n-3}{2}=\frac{n}{2}+\frac{1}{3}+\frac{1}{n-1}$$
and the same is true for $s(\e_j)$.
Hence,
\begin{align*}
\sum_{k=1}^{n-1}s(\e_k)\leq& (n-4)\left(\frac{n}{2}+1+\frac{1}{n-1}\right)+2\left(\frac{n}{2}+\frac{1}{3}+\frac{1}{n-1}\right)+\left(\frac{n}{2}+\frac{1}{n-1}\right)\\
=& (n-1)\left(\frac{n}{2}+\frac{n-2-\frac{1}{3}}{n-1}\right).
\end{align*}

In Case VII, $\e_i+\e_j$, $\e_i+\e_h$ and $\e_j+\e_h$ are $F_3$-fathers. Thus, we have
$$s(\e_i)\leq \frac{1}{n-1}+\frac{2}{3}+2+\frac{n-4}{2}=\frac{n}{2}+\frac{2}{3}+\frac{1}{n-1}$$
and the same is true for $s(\e_j)$ and $s(\e_h)$.
Hence,
\begin{align*}
\sum_{k=1}^{n-1}s(\e_k)\leq& (n-5)\left(\frac{n}{2}+1+\frac{1}{n-1}\right)+3\left(\frac{n}{2}+\frac{2}{3}+\frac{1}{n-1}\right)+\left(\frac{n}{2}+\frac{1}{n-1}\right)\\
=& (n-1)\left(\frac{n}{2}+1-\frac{1}{n-1}\right).
\end{align*}

\end{proof}
\end{lemma}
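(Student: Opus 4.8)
The plan is to normalize coordinates and then argue by an exhaustive case analysis on the location of the single extra codeword forced by the hypothesis. Following the setup preceding the lemma, I would take $\bu=\nolla$ as the special $F_{n-1}$-father, $\e_1\in C$ as a special codeword, $\e_n\notin C$ the unique non-codeword in $N(\nolla)$, and $\bv=\e_2+\e_n\in C$. Then $I(\nolla)=\{\e_1,\dots,\e_{n-1}\}$, and together with $\bv$ these are $n$ codewords of $I_3(\nolla)$; since $\nolla\notin C$, the assumption $|I_3(\bu)|\geq n+1$ produces at least one further codeword $\bc'\in B_3(\nolla)$ distinct from $\nolla$, from every $\e_k$, and from $\bv$. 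Because extra codewords can only decrease the shares of the $\e_k$, I would reduce to the case of exactly one such $\bc'$. By Lemma \ref{Ein-1isiäB2ssa} we may also assume no codeword of $I(\nolla)$ has two special fathers, so that in \ref{Rule1} we have $C'=\{\e_1,\dots,\e_{n-1}\}$; recall as well that $s(\e_2)\leq n/2+1/(n-1)$ by (\ref{e_2share}) and $s(\e_q)\leq n/2+1+1/(n-1)$ for $q\neq 2,n$ by Lemma \ref{TyyppiLemma}.

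The key reduction is to pin down the possible positions of $\bc'$. Because $\nolla\notin C$ we have $w(\bc')=d(\nolla,\bc')\in\{1,2,3\}$, and $w(\bc')=1$ is impossible since every weight-one word is some $\e_k$ or $\e_n$. Crucially, since $\e_1$ is special, its neighbourhood consists only of the father $\nolla$, the $n-2$ sons $\e_1+\e_j$ ($j\in[2,n-1]$) and the orphan $\e_1+\e_n$, none of which is a codeword, and each son $\e_1+\e_j$ has $I$-set $\{\e_1,\e_j\}$ by \ref{iv}, so no word $\e_1+\e_j+\e_k$ is a codeword either. Hence $d(\bc',\e_1)\geq 3$, forcing the index $1$ to lie outside the support of $\bc'$. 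Sorting the remaining weight-two and weight-three words by whether the indices $2$ and $n$ occur in their support then yields exactly the seven Cases I--VII: the three weight-two types $\e_2+\e_i,\ \e_n+\e_i,\ \e_i+\e_j$ and the four weight-three types $\e_2+\e_n+\e_i,\ \e_2+\e_i+\e_j,\ \e_n+\e_i+\e_j,\ \e_i+\e_j+\e_h$ with $i,j,h\in[3,n-1]$, the excluded combination $\e_2+\e_n$ being $\bv$ itself.

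In each case I would read off, from the presence of $\bc'$, how the $I$-sets around the nearby codewords $\e_k$ change: typically $\bc'$ turns one or two of the words $\e_2+\e_k$, $\e_i+\e_j$ into $F_3$-fathers (each contributing $1/3$ rather than $1/2$ to a neighbouring share), or makes some $\e_k$ into a son or a couple member, thereby lowering the shares of the two or three affected codewords below the generic bound $n/2+1+1/(n-1)$. For instance, in Case I the new codeword $\e_2+\e_i$ makes both $\e_2$ and $\e_2+\e_i$ into $F_3$-fathers and $\e_i$ into a son, giving $s(\e_2)\leq n/2-1/3+1/(n-1)$ and $s(\e_i)\leq n/2+1/3+1/(n-1)$. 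Summing over all $n-1$ codewords of $C'$, the gains and losses are arranged so that in every case
\[
\sum_{k=1}^{n-1}s(\e_k)\leq (n-1)\left(\frac{n}{2}+1-\frac{1}{n-1}\right),
\]
whereupon the averaging of \ref{Rule1} gives every codeword of $C'$, in particular the special codeword $\bc$, share at most $n/2+1-1/(n-1)$, as claimed.

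The main work, and the only real obstacle, is the bookkeeping inside the seven cases: one must correctly classify each word of $N[\e_k]$ for the relevant indices $k$ as an $F_3$- or $F_{n-1}$-father, a son, a couple member, or the single orphan, while checking that the newly inserted codeword $\bc'$ remains consistent with $\e_1$ being special and with the standing assumptions. The computation for $\e_2$ always needs separate care, since $\e_2$ already has the codeword neighbour $\bv$ and is therefore never special; once the local structure is identified, the arithmetic in each case is routine and collapses to the displayed uniform bound.
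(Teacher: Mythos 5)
Your proposal follows essentially the same route as the paper's own proof: the same normalization ($\bu=\nolla$, $\e_1$ special, $\e_n\notin C$, $\bv=\e_2+\e_n$), the same reduction to exactly one extra codeword $\bc'$ with $w(\bc')\in\{2,3\}$ and $d(\bc',\e_1)\geq3$, the same seven positional cases, and the same share bookkeeping averaged out by \ref{Rule1}, with your Case I computation matching the paper's verbatim. The six cases you leave as ``routine bookkeeping'' are carried out in the paper precisely by the mechanisms you identify (couples, sons, and $F_3$-fathers lowering the relevant shares), so nothing essential is missing.
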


From now on, we call a special father $\bu\in\F^n$ with $|I_3(\bu)|=n$ a \textit{sparse father}. Observe that if $\bu$ is a sparse father, then $\bu\not\in C$, there are exactly $n-1$ codewords in $I(\bu)$ and $n-2$ of these are special. Indeed, all the $n-2$ codewords in $I_3(u)$ other than the two codewords forming a couple are special codewords. Moreover, there is exactly one codeword at distance two from $\bu$ and no codewords at distance three from $\bu$. As can be seen in Figure \ref{Rule3Esim.}.

Now that we have restricted the structure of $C$ in $B_3(\nolla)$, we are ready to describe new rules for shifting share away from $\bv$ and, later, into $\bv$. \ref{Rule22} is applied after we have applied \ref{Rule1} to each suitable codeword in $C$.

\begin{enumerate}\label{sääntö22}
\item[\namedlabel{Rule22}{\textbf{Rule} $2$}:] Let $\bu$ be a sparse father and $\bv\in C$ be the codeword at distance two from $\bu$, that is, $d(\bu,\bv)=2$. Moreover, let there be exactly one $F_{n-2}$-father, say $\ba$, in $N(\bv)$. 
If, after applying \ref{Rule1}, we have  $s(\bv)> n/2+1/3+1/(n-5)$, then we shift $$\frac{1/6+1/(n-2)-1/(n-5)}{n-3}$$ share from $\bv$ to each other codeword in $I(\ba)$.\end{enumerate}

\begin{figure}
\centering
\begin{tikzpicture}


\node[main node](1) at (0,0)    {$\bu$};
\node[code node](2) at (-2.5,1) {};
\node[code node](3) at (-1.6,1) {};
\node[code node](4) at (-0.5,1) {};
\node[main node](5) at (2.5,1)  {};
\node[code node](6) at (1.5,1)  {};
\node[code node](7) at (0.5,1)  {};
\node[main node](18) at (0.8,2)  {};
\node[main node](28) at (1.5,2)  {};
\node[code node](8) at (2.2,2)    {$\bv$};
\node[main node](11) at (1.5,3) {$\ba$};

\node[code node](12) at (0,4) {};
\node[code node](13) at (1,4) {};
\node[code node](14) at (2,4) {};
\node[code node](15) at (3,4) {};

\node[] at (0.03,1) {$\cdots$};
\node[] at (2.53,4) {$\cdots$};
\node[] at (1.5,4.35) {$\overbrace{\hspace{3.5cm}}$};
\node[] at (1.5,4.6) {$n-3$ codewords};
\node[] at (2.2,2.95) {$F_{n-2}$};
\node[] at (0.75,-0.1) {$F_{n-1}$};

\path[draw,thick]
    (1) edge node {} (2)
    (1) edge node {} (3)
    (1) edge node {} (4)
    (1) edge node {} (5)
    (1) edge node {} (6)
    (1) edge node {} (7)
    (8) edge node {} (5)
    (28) edge node {} (5)
    (18) edge node {} (6)
    (28) edge node {} (7)
    (18) edge node {} (7)
    (8) edge node {} (6)
    (11) edge node {} (18)
    (11) edge node {} (28)
    (8) edge node {} (11)
    (11) edge node {} (12)
    (11) edge node {} (13)
    (11) edge node {} (14)
    (11) edge node {} (15)

    ;

\end{tikzpicture}\centering
\caption{Gray vertices are codewords. Word $\bu$ is a sparse father and word $\ba$ is an $F_{n-2}$-father. Now, \ref{Rule22} shifts $(1/6+1/(n-2)-1/(n-5))/(n-3)$ share from codeword $\bv$ to each other codeword in $I(\ba)$.}\label{Rule2Esim.}
\end{figure}
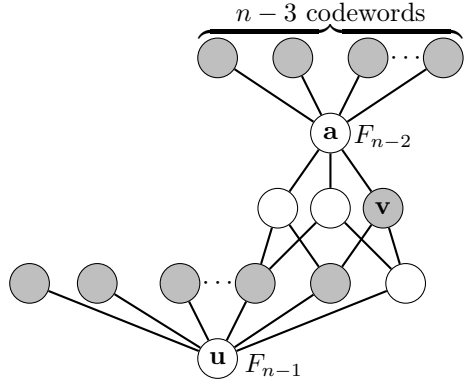

We have illustrated \ref{Rule22} in Figure \ref{Rule2Esim.}. Observe that if the codeword $\bv$ has another father besides the $F_{n-2}$-father in \ref{Rule22}, then the codeword has share of $s(\bv)\leq \frac{1}{n-2}+\frac{1}{3}+\frac{n}{2}$ and \ref{Rule22} does not shift any share. Before approximating how \ref{Rule22} changes the shares, we show that neither \ref{Rule22} nor \ref{Rule1} shift share to word $\bv$.

\begin{lemma}\label{EiShareaVlle}
Let $n\geq 11$. Then \ref{Rule1} or \ref{Rule22} do not shift share to the codeword $\bv$ at distance two from a sparse father $\bu$.
\begin{proof}
Let us again assume, without loss of generality, that $\bu=\nolla\not\in C$ is a sparse father, $\e_n\not\in C$ and $\bv=\e_2+\e_n\in C$.

Let us first assume that \ref{Rule1} shifts share to $\bv$. Hence, we have another $F_{n-1}$-father in $B_3(\nolla)$ and there are at least $n+1$ codewords in $B_3(\nolla)$, a contradiction. Let us assume then that \ref{Rule22} shifts share to $\bv$. Since  \ref{Rule22} shifts share to $\bv$, there is an $F_{n-2}$-father in $N(\bv)$ of weight three. Let us assume without loss of generality that $\ba=\e_2+\e_3+\e_n\not\in C$ is the $F_{n-2}$-father. Observe that the only non-codewords in $N[\ba]$ are $\e_2+\e_3$, $\e_3+\e_n$ and $\ba$ itself. We may assume that we shift the share from word $\bw=\ba+\e_i\in I(\ba)$ where $i\in\{1\}\cup[4,n-1]$ to $\bv$. Hence, there has to be a special father $\bu'$ in $B_2(\bw)$ and another codeword in $I(\bw)$. Observe that $w(\bu')=6$. Indeed, since $w(\bw)=4$ and $d(\bw,\bu')=2$, we have $w(\bu')\in\{2,4,6\}$ and since $|I_3(\nolla)|=n$, we have $w(\bu')=6$. Thus, there exists a codeword $\bw+\e_j\in I(\bw)\cap I(\bu')$ of weight five. Since $d(\bw+\e_j,\ba)=2$, they have two common neighbours and both of them belong to the code $C$ since the non-codewords in $N(\ba)$ have weight of two. However, both of these words locate in $B_3(\bu')\setminus N[\bu']$ and thus, $|I_3(\bu')|\geq n+1$. Hence, \ref{Rule22} does not shift share from $\bw$ to $\bv$ (a contradiction) and thus, the claim follows.
\end{proof}
\end{lemma}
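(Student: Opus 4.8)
The plan is to handle the two rules separately: in each case I assume for contradiction that share is moved to $\bv$ and then derive a violation of the sparseness of $\bu$. After the standard normalization (so that $\bu=\nolla\not\in C$ is the sparse father, $\e_n\not\in C$ and $\bv=\e_2+\e_n\in C$ with $\e_1$ special), the structure of a sparse father tells me that $\bv$ is the only codeword at distance two from $\nolla$, there are no codewords at distance three, and the codewords of $I(\nolla)$ are exactly $\e_1,\dots,\e_{n-1}$.

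First I would dispose of \ref{Rule1}. For \ref{Rule1} to move share to $\bv$, the word $\bv$ must lie in the set $C'$ of some special $F_{n-1}$-father $\bu'$, so $\bu'\in N[\bv]$ is a non-codeword $F_{n-1}$-father. Since $w(\bv)=2$, the only candidates are the weight-one neighbour $\e_n$ (the other weight-one neighbour $\e_2$ is a codeword) and the weight-three neighbours $\e_2+\e_n+\e_i$. I would rule these out by a counting argument against sparseness: if $\bu'=\e_n$, then every word $\e_n+\e_i$ would have to be a codeword at distance two from $\nolla$, forcing $|I_3(\nolla)|\geq n+1$; and if $\bu'=\e_2+\e_n+\e_i$, then among the $n+1$ words of $N[\bu']$ only $\bv$ together with the $n-3$ weight-four neighbours can be codewords, giving at most $n-2$ codewords in $N[\bu']$ and so contradicting that $\bu'$ is an $F_{n-1}$-father. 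Either way \ref{Rule1} cannot feed $\bv$.

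For \ref{Rule22}, share can reach $\bv$ only if $\bv\in I(\ba)\setminus\{\bw\}$, where $\ba$ is the weight-three $F_{n-2}$-father adjacent to $\bv$ and $\bw$ is the distance-two codeword of some other sparse father $\bu'$. Taking $\ba=\e_2+\e_3+\e_n$, I would first pin down $I(\ba)$: since $\bv$ is the unique codeword at distance two from $\nolla$, the two non-codeword neighbours of $\ba$ must be $\e_2+\e_3$ and $\e_3+\e_n$, so every weight-four neighbour $\ba+\e_i$ is a codeword and $\bw=\ba+\e_i$ for some $i\in\{1\}\cup[4,n-1]$. The core of the argument is then locating $\bu'$: because $d(\bw,\bu')=2$ and $w(\bw)=4$, we have $w(\bu')\in\{2,4,6\}$, and I would exclude $w(\bu')=2$ (a second special father inside $B_2(\nolla)$, handled by Lemma \ref{Ein-1isiäB2ssa}) and $w(\bu')=4$ (the extra codewords it forces into $B_3(\nolla)$ contradict sparseness), leaving $w(\bu')=6$.

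Finally, with $w(\bu')=6$ the sparse structure around $\bu'$ supplies a weight-five codeword $\bw+\e_j\in I(\bw)\cap I(\bu')$; since $d(\bw+\e_j,\ba)=2$ and all weight-four neighbours of $\ba$ are codewords, the two common neighbours of $\bw+\e_j$ and $\ba$, namely $\bw$ and $\ba+\e_j$, are both codewords lying in $B_2(\bu')\setminus N[\bu']$. This exhibits two distinct codewords at distance two from $\bu'$, forcing $|I_3(\bu')|\geq n+1$ and contradicting that $\bu'$ is sparse. I expect the main obstacle to be the \ref{Rule22} case, specifically the weight bookkeeping needed to force $w(\bu')=6$ and then to produce two distance-two codewords of $\bu'$; by contrast the \ref{Rule1} case reduces to a comparatively direct counting argument against sparseness.
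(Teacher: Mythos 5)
Your proposal is correct and follows essentially the same route as the paper's own proof: the same normalization, the same identification of the weight-three $F_{n-2}$-father $\ba=\e_2+\e_3+\e_n$ whose weight-four neighbours are all codewords, the same weight analysis $w(\bu')\in\{2,4,6\}$ forcing $w(\bu')=6$, and the same closing contradiction exhibiting two codewords, $\bw$ and $\ba+\e_j$, at distance exactly two from $\bu'$, whence $|I_3(\bu')|\geq n+1$ against sparseness. Your treatment of \ref{Rule1} is in fact more explicit than the paper's one-line count (the paper merely asserts that a second $F_{n-1}$-father would force at least $n+1$ codewords into $B_3(\nolla)$); your two-case check, $\e_n$ versus a weight-three neighbour of $\bv$, verifies the same point cleanly, noting correctly that $\e_2$ is excluded because special fathers are non-codewords.

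One justification needs repair: you exclude $w(\bu')=2$ by citing Lemma \ref{Ein-1isiäB2ssa}, but that lemma does not exclude any configuration --- it only asserts the share bound $s(\bc)\leq n/2+1-1/(n-1)$ after \ref{Rule1} when two special fathers lie within distance two, and says nothing about whether \ref{Rule22} moves share to $\bv$. The exclusion you need is instead the same sparseness count you already deploy for $w(\bu')=4$: since the sparse father $\nolla$ has no codewords at distance three, every weight-three word is a non-codeword, so a weight-two $\bu'$ has at most its two weight-one neighbours as codewords, giving $|I(\bu')|\leq 2$; hence $\bu'$ cannot be an $F_{n-1}$-father, let alone sparse. (For the same reason your gloss on $w(\bu')=4$ is best phrased not as ``extra codewords forced into $B_3(\nolla)$'' but as starvation: all weight-three neighbours of $\bu'$ are non-codewords, leaving at most $n-4$ codeword neighbours, too few for $|I(\bu')|=n-1$.) This is precisely how the paper disposes of both subcases with the single remark that $|I_3(\nolla)|=n$; with that one-line substitution your argument matches the paper's proof step for step.
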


Now, in Lemma \ref{22 siirto} we approximate how \ref{Rule22} changes the amounts of share.

\begin{lemma}\label{22 siirto}
Let $n\geq 11$ and let $\ba$, $\bu$ and $\bv$ be as in \ref{Rule22}. Then 
\begin{enumerate}
\item[1.] after applying \ref{Rule1} and \ref{Rule22} we have $$s(\bc)\leq \begin{cases}\frac{n}{2}+\frac{1}{2}+\frac{n-1}{3(n-4)},&\text{if } n\in[11,12]\\
\frac{n}{2}+1+\frac{1}{n^2-5n}-\frac{2}{3n},&\text{if } 13\leq n\end{cases}$$ for each $\bc\in I(\ba)\setminus\{\bv\}$ and
\item[2.] after applying \ref{Rule1} and \ref{Rule22}, $s(\bv)\leq n/2+1/3+1/(n-5).$
\end{enumerate}
\begin{proof}
Let us again assume, without loss of generality, that $\bu=\nolla\not\in C$ is a sparse father, $\e_n\not\in C$ and $\bv=\e_2+\e_n\in C$. Moreover, we may assume that $s(\bv)> n/2+1/3+1/(n-5)$ as otherwise the second claim is immediately clear and the first one follows by Lemma~\ref{TyyppiLemma}.

Let us again assume, without loss of generality, that $\ba=\e_2+\e_3+\e_n\not\in C$ is the only $F_{n-2}$ father in $N(\bv)$.  Let us then consider Case $1$. Let $\bc\in I(\ba)\setminus\{\bv\}=\{\ba+\e_i\mid i=1 \text{ or } i\in[4,n-1]\}$. Indeed, notice that the non-codewords in $N(\ba)$ have weight two since $\nolla$ is a sparse father and thus, each weight four neighbour of $\ba$ is a codeword. We may assume, without loss of generality, that $\bc=\ba+\e_1=\e_1+\e_2+\e_3+\e_n$. (Observe that unlike, for example, in the proof of Theorem \ref{perusraja}, we have not assigned any role for word $\e_1$ and we may now make the assumption on $\bc$.) Now we further split into the following three subcases:
\begin{enumerate}
\item[A.] First we assume that $\bc$ does not have exactly one adjacent special father or other adjacent $F_{n-2}$-fathers except $\ba$,
\item[B.] then we assume that there are multiple $F_{n-2}$-fathers in $N(\bc)$ and $\bc$ does not have  exactly one adjacent special father and
\item[C.] finally, we assume that there is exactly one special father in $N(\bc)$ and possibly multiple $F_{n-2}$-fathers.
\end{enumerate}
Together, these three subcases go through all the possibilities. Observe, that by the definition \ref{Rule1} is applied to the codeword $\bc$ only if there is exactly one adjacent special father.

\medskip


\emph{Let us now consider Subcase $A$.} We assume that the only $F_{n-2}$-father in $N(\bc)$ is $\ba$ and there are either no special fathers in $N(\bc)$ or there are multiple special fathers. Hence, \ref{Rule1} does not shift share to or from $\bc$. Observe that $I(\e_1+\e_2+\e_3)\cap N(\ba)=\{\bc\}$ and $I(\e_1+\e_3+\e_n)\cap N(\ba)=\{\bc\}$. Since $\e_1+\e_2+\e_3$ and $\e_1+\e_3+\e_n$ are non-codewords as $\bu$ is a sparse father and $C$ is a locating-dominating code, there has to be a codeword $\bc'$ ($\neq\bc$) such that $|N(\bc')\cap\{\e_1+\e_3+\e_n,\e_1+\e_2+\e_3\}|=1$ and $w(\bc')=4$. Hence, we have $d(\bc,\bc')=2$  and $\bc'\not\in I(\ba)$, that is, $\bc'=\e_1+\e_3+\e_x+\e_i$ where $x\in\{2,n\}$ and $i\in [4,n-1]$. Moreover, by Lemma \ref{palloleikkaus}, there has to be another common neighbour $\bb$ with $\bc$ and $\bc'$ besides $\bc'+\e_i$. Furthermore, we have $w(\bb)=5$ and $|I(\bb)|\geq3$ since $d(\ba,\bb)=2$ and $N(\ba)\cap N(\bb)\subseteq C$ as each neighbour of $\ba$ of weight four is a codeword. Thus $\{\bc'\}\cup(N(\ba)\cap N(\bb))\subseteq I(\bb)$. Therefore, we have $$s(\bc)\leq \frac{1}{n-2}+\frac{1}{3}+2+\frac{n-3}{2}=\frac{n}{2}+\frac{5}{6}+\frac{1}{n-2}$$ before applying \ref{Rule22}. Moreover, after applying \ref{Rule22} we have $$s(\bc)\leq \frac{n}{2}+\frac{5}{6}+\frac{1}{n-2}+\frac{1/6+1/(n-2)-1/(n-5)}{n-3}\leq\begin{cases}\frac{n}{2}+\frac{1}{2}+\frac{n-1}{3(n-4)}, &\text{if } 11\leq n\leq 12\\
\frac{n}{2}+1+\frac{1}{n^2-5n}-\frac{2}{3n}, &\text{if } 13\leq n. \end{cases}$$Indeed, the inequality can be shown as follows. The cases $n\in[11,13]$ are verified by substituting the corresponding value of $n$ to the inequalities. When $n\geq14$, we observe that $1/(n-2)+(1/6+1/(n-2)-1/(n-5))/(n-3)<1/(n-2)+1/(6(n-3))<1/10$ and $1/6+1/(n^2-5n)-2/(3n)>1/6-2/(3n)>1/10$.

\medskip

\emph{Let us then consider Subcase $B$} where we have $2\leq h$ copies of $F_{n-2}$-fathers in $N(\bc)$ and we do not have exactly one special father in $N(\bc)$. Hence, we shift share to codeword $\bc$ with \ref{Rule22} at most $h$ times and \ref{Rule1} does not shift share to or from the codeword $\bc$. Moreover, we have $h\leq n-3$ since 
$\bc$ has four neighbours of weight $3$ and only one of these can be a non-codeword $F_{n-2}$-father, that is, the word $\ba$. Indeed, this is true since we have $|I_3(\nolla)|=n$, each non-codeword $F_{n-2}$-father of weight three has a codeword neighbour of weight two and the only codeword of weight two, $\bv$, has exactly one $F_{n-2}$-father in its neighbourhood.
Hence, we may give a (rough) approximate $$s(\bc)\leq \frac{h}{n-2}+2+\frac{n-1-h}{2}=\frac{n}{2}+\frac{3}{2}+\frac{h}{n-2}-\frac{h}{2}\leq \frac{n}{2}+\frac{1}{2}+\frac{2}{n-2}$$ before applying \ref{Rule22}. Observe that since $h\leq n-3$, we add at most $1/6$ share to $\bc$ and after applying \ref{Rule22} we have $$s(\bc)\leq \frac{n}{2}+\frac{2}{3}+\frac{2}{n-2}\leq \begin{cases}\frac{n}{2}+\frac{1}{2}+\frac{n-1}{3(n-4)}, &\text{if } 11\leq n\leq 12\\
\frac{n}{2}+1+\frac{1}{n^2-5n}-\frac{2}{3n}, &\text{if } 13\leq n. \end{cases}$$
Indeed, we immediately see that this is true by comparing this to the inequality in Subcase $A$  since $n/2+2/3+2/(n-2)<n/2+5/6+1/(n-2)$. 

\medskip

\emph{Then there is Subcase $C.$} In this case there is exactly one special father in $N(\bc)$ and  $h$ copies of $F_{n-2}$-fathers where $h\in[1,n-3]$ as above. Now, we first shift share using \ref{Rule1} to $\bc$ and after that at most $h$ times (but at least once) using \ref{Rule22}.  Moreover, let $\bw$ be the special father in $N(\bc)$. 
Let us again denote by $C'$ the set of codewords in $I(\bw)$ with exactly one special father in their neighbourhoods.  Now, as we will see next, there can be at most two special codewords in $I(\bw)$. Furthermore, we have $F_{n-2}$-father $\ba$ at distance two from the special father $\bw$ and hence, there are multiple codewords at distance three from father $\bw$. Observe that $w(\bw)=5$ since $\bu$ is a sparse father and $d(\bw,\ba)=2$. Recall that all words of weight four in $N(\ba)$ are codewords since $\ba$ is an $F_{n-2}$-father. Hence, $|I(\ba)\cap I(\bw)|=2$. Recall also that we have assumed  $s(\bv)>n/2+1/3+1/(n-5)$.

\smallskip

We may assume without loss of generality that $\bw=\ba+\e_1+\e_4=\bc+\e_4=\e_1+\e_2+\e_3+\e_4+\e_n$. Let us first show that $\bw+\e_3\not\in C$. If $\bw+\e_3\in C$, then $\{\bc,\bw+\e_3,\bv\}\subseteq I(\bc+\e_3)$. Hence, we have $s(\bv)\leq1/3+1/(n-2)+1+2/2+(n-4)/2=n/2+1/3+1/(n-2)<n/2+1/3+1/(n-5)$ a contradiction with one of the assumptions. Thus, $I(\bw)=N(\bw)\setminus\{\bw+\e_3\}$.

 Next, we show that there are at most two special codewords in $I(\bw)$. First of all, $\bw+\e_1$ and $\bw+\e_4$ are adjacent to $\ba$ and therefore are not special. Moreover, there are at least two fathers in $N(\bw+\e_i)$, for $i\in\cup[5,n-1]$. Indeed, we have \begin{equation}\label{w naapurit}
\{\bw+\e_i,\bw+\e_4,\ba+\e_i\}\subseteq I(\bw+\e_i+\e_4).
\end{equation} Thus, the codeword $\bw+\e_i$ is not special and the only possible special codewords are $\bw+\e_2$ and $\bw+\e_n$.

In the following, we give a rough estimate for the share of codewords of type $\bw+\e_i$, $i\in[5,n-1]$. Notice that in (\ref{w naapurit}) we have shown that $\bw+\e_i+\e_4$ is a father. Similarly, we can see that $\bw+\e_i+\e_1$ is a father. Hence, we get approximation $$s(\bw+\e_i)\leq 2+\frac{2}{3}+\frac{1}{n-1}+\frac{n-4}{2}=\frac{n}{2}+\frac{2}{3}+\frac{1}{n-1}.$$ Moreover, we may use this upper bound also for $\bw+\e_1$ since $\ba\in N(\bw+\e_1)$ and $1/(n-2)+1/2<2/3$ as $n-2>6$.

Let us then consider the neighbourhood of $\bc$. As we have seen in (\ref{w naapurit}), $\bc=\bw+\e_4$ has $n-5$ adjacent fathers of form $\bc+\e_i=\bw+\e_i+\e_1$, $i\in [5,n-1]$. Moreover, $\bc+\e_1=\ba$ is an $F_{n-2}$-father and $\bc+\e_4=\bw$  is an $F_{n-1}$-father. Furthermore, $\bc+\e_2$, $\bc+\e_n$ and $\bc+\e_3$ are adjacent to the codewords $\bw+\e_2$, $\bw+\e_n$ and $\bv$, respectively. Thus, we may have at most three sons, no orphans and at least $n-3$ fathers in $N(\bc)$. Moreover, at least $h$ of these $n-3$ fathers are $F_{n-2}$-fathers. Therefore, we get $$s(\bc)\leq \frac{1}{n-1}+\frac{h}{n-2}+\frac{3}{2}+1+\frac{n-4-h}{3}=\frac{n-h}{3}+\frac{7}{6}+\frac{h}{n-2}+\frac{1}{n-1}.$$

 By these considerations we get the following (rough) upper bound for the share $s(\bc)$ after applying \ref{Rule1}. The first term of the sum is for the special codewords, second for $\bc$ and the third one for other codewords in $C'$.
\begin{align*}
|C'|s(\bc)&\leq 2\left(\frac{n}{2}+1+\frac{1}{n-1}\right)+\left(\frac{n-h}{3}+\frac{7}{6}+\frac{h}{n-2}+\frac{1}{n-1}\right)+\left(|C'|-3\right)\left(\frac{n}{2}+\frac{2}{3}+\frac{1}{n-1}\right)\\
&=|C'|\left(\frac{n}{2}+\frac{2}{3}+\frac{1}{n-1}\right)+\frac{7}{6}+\frac{h}{n-2}-\frac{h}{3}-\frac{n}{6}\\
&<|C'|\left(\frac{n}{2}+\frac{2}{3}+\frac{1}{n-1}\right).
\end{align*}

\smallskip

Now, we may apply \ref{Rule22} to shift share to $\bc$. We shift $h(1/6+1/(n-2)-1/(n-5))/(n-3)$ share to $\bc$. To ease the approximation, we notice that $h(1/6+1/(n-2)-1/(n-5))/(n-3)< 1/6$ since $h\leq n-3$ and we use this when $n\geq13$. In the following, the cases $n\in[11,12]$ are verified by substituting the corresponding value of $n$ to the inequalities $$s(\bc)\leq \frac{n}{2}+\frac{1}{n-1}+\frac{2}{3}+h\frac{1/6+1/(n-2)-1/(n-5)}{n-3}\leq\begin{cases}\frac{n}{2}+\frac{1}{2}+\frac{n-1}{3(n-4)}, &\text{if } 11\leq n\leq 12\\
\frac{n}{2}+\frac{5}{6}+\frac{1}{n-1}, &\text{if } 13\leq n. \end{cases}$$
Moreover, we have $n/2+5/6+1/(n-1)\leq  n/2+1+1/(n^2-5n)-2/(3n)$ when $n\geq 13$. Indeed, when $n\geq13$, we have $1/(n-1)+2/(3n)\leq 1/6$.

Finally, we consider the second case. We shift $(1/6+1/(n-2)-1/(n-5))/(n-3)$ share away from $\bv$ to $n-3$ different codewords using \ref{Rule22}. Observe that \ref{Rule1} does not shift share to $\bv$. Thus, we have

$$s(\bv)\leq \frac{2}{2}+1+\frac{1}{n-2}+\frac{n-3}{2}-(n-3)\frac{\frac{1}{6}+\frac{1}{n-2}-\frac{1}{n-5}}{n-3}=\frac{n}{2}+\frac{1}{3}+\frac{1}{n-5}.$$\end{proof}
\end{lemma}

Finally, we are ready to describe how we shift share to $\bv$. Observe, that for a sparse father $\bu$, we have $I_3(\bu)=I_2(\bu)$ and in the following rule we could as well consider $B_2(\bu)$ instead of $B_3(\bu)$.

\begin{enumerate}\label{sääntö23}
\item[\namedlabel{Rule23}{\textbf{Rule} $3$}:] Let $\bu$ be a sparse father. After applying \ref{Rule1} and \ref{Rule22}, we average out the shares among the codewords in $B_3(\bu)$, that is, the new shares for the codewords in $B_3(\bu)$ become $$\frac{\sum_{\bc\in I_3(\bu)} s(\bc)}{n}.$$
\end{enumerate}

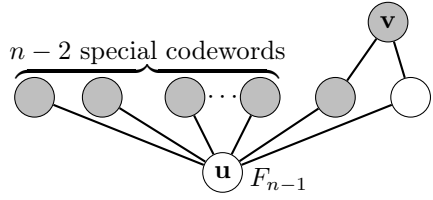
\begin{figure}
\centering
\begin{tikzpicture}


\node[main node](1) at (0,0)    {$\bu$};
\node[code node](2) at (-2.5,1) {};
\node[code node](3) at (-1.6,1) {};
\node[code node](4) at (-0.5,1) {};
\node[main node](5) at (2.5,1)  {};
\node[code node](6) at (1.5,1)  {};
\node[code node](7) at (0.5,1)  {};
\node[code node](8) at (2.2,2)    {$\bv$};

\node[] at (0.03,1) {$\cdots$};
\node[] at (-1.0,1.35) {$\overbrace{\hspace{3.5cm}}$};
\node[] at (-1.0,1.6) {$n-2$ special codewords};
\node[] at (0.75,-0.1) {$F_{n-1}$};

\path[draw,thick]
    (1) edge node {} (2)
    (1) edge node {} (3)
    (1) edge node {} (4)
    (1) edge node {} (5)
    (1) edge node {} (6)
    (1) edge node {} (7)
    (8) edge node {} (5)
    (8) edge node {} (6)

    ;

\end{tikzpicture}\centering
\caption{Gray vertices are codewords. Word $\bu$ is a sparse father. Now, \ref{Rule23} averages the share between the $n-2$ special codewords, the codeword $\bv$ and the codeword in $I(\bv)$. Notice that we apply first \ref{Rule1}, then \ref{Rule22} and after that \ref{Rule23}.}\label{Rule3Esim.}
\end{figure}

In the following theorem, the upper bound is derived by applying first \ref{Rule1} to each suitable codeword, then applying \ref{Rule22} to each suitable codeword and finally applying \ref{Rule23}. 
Applying \ref{Rule23} is illustrated in Figure \ref{Rule3Esim.}. Now we are finally ready to present the proof of Theorem \ref{parasraja}.

\begin{customthm}{7}\label{2/3saavutettu raja}
We have $$\gamma^{LD}(\F^n)\geq \begin{cases}\frac{2^{n+1}}{n+1+2(n-1)/(3(n-4))}, &\text{if } 11\leq n\leq12\\
\frac{2^{n+1}}{n+2+2/(n^2-5n)-4/(3n)},&\text{if }13\leq n.\end{cases}$$
\begin{proof}Let $C\subseteq \F^n$ be a locating-dominating code. Our goal is to show that after applying \ref{Rule1}, \ref{Rule22} and \ref{Rule23} on $C$, we have $$s(\bc)\leq \begin{cases}\frac{n}{2}+\frac{1}{2}+\frac{n-1}{3(n-4)}, &\text{if } 11\leq n\leq12\\
\frac{n}{2}+1+\frac{1}{n^2-5n}-\frac{2}{3n},&\text{if }13\leq n\end{cases}$$ for each codeword $\bc\in C$. By (\ref{shareIdea2}), this gives the claim.  

Let us assume that we have applied \ref{Rule1} on the locating-dominating code $C\subseteq \F^n$.  Observe that when $n\in[11,12]$, we have $n/2+1/2+(n-1)/(3(n-4))>n/2+1-1/(n-1)$ and when $n\geq13$, we have $n/2+1+1/(n^2-5n)-2/(3n)>n/2+1-1/(n-1)$. Thus, if \begin{equation}\label{isoShare}
s(\bc)> 
\begin{cases}
\frac{n}{2}+\frac{1}{2}+\frac{n-1}{3(n-4)}, &\text{if } 11\leq n\leq12\\ 
\frac{n}{2}+1+\frac{1}{n^2-5n}-\frac{2}{3n},&\text{if }13\leq n,
\end{cases}
\end{equation}
%
%
then according to Lemma \ref{TyyppiLemma} (in the cases $n \geq 13$) together with Inequality (\ref{n-3Lasta}) (in the cases $n\in[11,12]$) $\bc$ is a special codeword, and further by Lemma \ref{Ein-1isiäB2ssa} $\bc$ is adjacent to exactly one special father, say $\bu$, which is actually sparse due to Lemma \ref{nKoodisanaaB3:ssa}. 
We will first confirm that after applying \ref{Rule22} and \ref{Rule23} codeword $\bc$ has the desired share. 

Let us assume again, without loss of generality, that $\bu=\nolla$, $I(\bu)=\{\e_i\mid i\in[1,n-1]\}$ and $\bv=\e_2+\e_n\in C$. Since $|I_3(\nolla)|= n$, we have $I(\bv)=\{\e_2,\bv\}$. Hence, $\bv$ and $\e_2$ form a couple. Moreover, since $\bv\in C$, there is a father in $N[\bv]$ and the father has weight three since $|I_3(\nolla)|=n$. Without loss of generality, let $\bv+\e_3$ be this $F_j$-father in $N(\bv)$ for some $j\geq3$. Since $\e_2+\e_3\not\in C$, $\e_3+\e_n\not\in C$ and $\bv+\e_3\not\in C$, we have $j\leq n-2$. Moreover, we notice that there is exactly one sparse father $\bu'$ in $B_3(\bv)$ (namely, $\bu'=\bu$). Indeed, since $\nolla$ is a sparse father, there cannot be another sparse father with weight four or less as $|I_3(\nolla)|=n$. Moreover, since a sparse father does not have any codewords at distance three, we have $d(\bu',\bv)=2$. Consequently, $w(\bu')=0$ and $\bu'=\bu$. Hence, \ref{Rule23} affects the word $\bv$ exactly once. Moreover, notice that by Lemma \ref{EiShareaVlle} \ref{Rule1} or \ref{Rule22} do not shift share into the codeword $\bv$. We will now analyze the share of the codeword $\bv$ in order to approximate how much share the codewords in $I_3(\nolla)$ have after we have applied \ref{Rule23}. In particular, we show that $s(\bv)\leq n/2+1/3+1/(n-5)$ before we have applied \ref{Rule23}.

Let us first assume that we have exactly one $F_{n-2}$-father in $N[\bv]$. Now, we apply \ref{Rule22} to $\bv$ or we have $s(\bv)\leq n/2+1/3+1/(n-5)$. By Lemma \ref{22 siirto}, after applying the rule,  we have $s(\bv)\leq n/2+1/3+1/(n-5)$.

Let us then assume that we do not have exactly one $F_{n-2}$-father in $N[\bv]$. Thus, \ref{Rule22} does not shift share in or out of word $\bv$. In this case, there has to be at least two fathers in $N(\bv)$ since if we have only one father, then there are at least $n-3$ sons and thus, an $F_{n-2}$-father. To maximize the share, we assume that there are exactly two fathers in $N[\bv]$. Indeed, if we have more than three fathers in $N[\bv]$, then $s(\bv)\leq 4/3+2/2+1+(n-6)/2=n/2+1/3$. If we have exactly three fathers, then $$s(\bv)\leq \frac{1}{j_1}+\frac{1}{j_2}+\frac{1}{j_3}+\frac{2}{2}+1+\frac{n-5}{2}\leq\frac{n}{2}+\frac{1}{6}+\frac{1}{n-8}\leq \frac{n}{2}+\frac{1}{3}+\frac{1}{n-5}.$$
Here the second inequality is due to Inequalities (\ref{isaMaara}) and (\ref{Apuyhtälö}). Indeed, Inequality (\ref{isaMaara}) gives that $j_1-1+j_2-1+j_3-1\geq n-5$, then to maximize the share $s(\bv)$ we assume that $j_1+j_2+j_3=n-2$ and then we use Inequality (\ref{Apuyhtälö}) to approximate that $j_1=3$, $j_2=3$ and $j_3=n-8$. The third inequality holds when $n=11$ and if $n\geq12$, then it is strict. Now, we may consider the case with two fathers $$s(\bv)\leq \frac{1}{j_1}+\frac{1}{j_2}+\frac{2}{2}+1+\frac{n-4}{2}\leq\frac{n}{2}+\frac{1}{3}+\frac{1}{n-5}.$$ The latter inequality is again due to Inequalities (\ref{isaMaara}) and (\ref{Apuyhtälö}). Indeed, similarly as in the previous case, Inequality (\ref{isaMaara}) gives that $j_1-1+j_2-1\geq n-4$, then to maximize the share $s(\bv)$ we assume that $j_1+j_2=n-2$ and then we use Inequality (\ref{Apuyhtälö}) to approximate that $j_1=3$ and $j_2=n-5$. Therefore, we have again $s(\bv)\leq n/2+1/3+1/(n-5)$. Now we are ready to apply \ref{Rule23}.

Recall that the $n-1$ codewords in $I(\nolla)$ have share of at most $n/2+1$ after applying \ref{Rule1} by Theorem \ref{perusraja}. Moreover, because $|I_3(\nolla)|=n$, \ref{Rule22} does not shift share into them and \ref{Rule23} affects them at most once. If we now apply \ref{Rule23} to codewords $\bc$ in $B_3(\nolla)$, then we get \begin{equation}\label{sääntö23 tasaus}
n\cdot s(\bc)\leq (n-1)\left(\frac{n}{2}+1\right)+\frac{n}{2}+\frac{1}{3}+\frac{1}{n-5}=n\left(\frac{n}{2}+1+\frac{1}{n^2-5n}-\frac{2}{3n}\right).
\end{equation}
Observe (verify by substituting $n$) that when $n\in [11,12]$, we have $n/2+1+1/(n^2-5n)-2/(3n)<n/2 + 1/2 + (n - 1)/(3 (n - 4))$. Therefore, each codeword $\bc$ considered in Inequality (\ref{isoShare}) has the desired share after applying \ref{Rule1}, \ref{Rule22} and \ref{Rule23}.

Let us finally confirm that if we have for some codeword $\bc\in C$ \begin{equation}\label{raja}s(\bc)\leq \begin{cases}\frac{n}{2}+\frac{1}{2}+\frac{n-1}{3(n-4)}, &\text{if } 11\leq n\leq12\\
\frac{n}{2}+1+\frac{1}{n^2-5n}-\frac{2}{3n},&\text{if }13\leq n\end{cases}
\end{equation} before applying \ref{Rule22} and \ref{Rule23}, then the same bounds also hold after we have applied these rules. First of all, by Lemma \ref{22 siirto}, \ref{Rule22} does not push the share of any codeword over Bound (\ref{raja}). Moreover, \ref{Rule23} cannot increase the share of any codeword over this bound because it averages the shares in $I_3(\bu)$ for some sparse father and if it pushes the share of one codeword over the bound, then the shares of all codewords are over the bound. Moreover, if \ref{Rule23} pushes the shares of codewords over Bound (\ref{raja}), then one of the codewords already had its share above Bound (\ref{raja}) after \ref{Rule1} and \ref{Rule22} have been applied. However, this codeword cannot have received share according to \ref{Rule22} as otherwise a contradiction with Lemma~\ref{22 siirto} follows. Therefore, the share is over Bound~(\ref{raja}) already after \ref{Rule1} has been applied, but now this contradicts with (\ref{sääntö23 tasaus}). Hence, the claim follows.\end{proof}
\end{customthm}

\bibliographystyle{abbrv}\label{Biblio}

\bibliography{LD}
\end{document}